\newtheorem{theorem}{Theorem}[section]
\newtheorem{lemma}[theorem]{Lemma}
\newtheorem{proposition}[theorem]{Proposition}                            
\newtheorem{corollary}[theorem]{Corollary}
\newtheorem{definition}[theorem]{Definition}
\newtheorem{remark}{\it Remark\/}
\newtheorem{example}{\it Example\/}
\def\commutatif{\circlearrowleft}
\newcommand{\s}[0]{\ensuremath{P}}
\newcommand{\R}[0]{\ensuremath{R}}
\newcommand{\Q}[0]{\ensuremath{Q}}
\newcommand{\RR}[0]{\ensuremath{\mathbb{R}}}
\newcommand{\AF}[0]{\ensuremath{\mathbb{A}}}
\newcommand{\ZZ}[0]{\ensuremath{\mathbb{Z}}}
\newcommand{\KK}[0]{\ensuremath{\mathbb{K}}}
\newcommand{\CC}[0]{\ensuremath{\mathbb{C}}}
\newcommand{\EN}[1]{\ensuremath{\operatorname{\Gamma^{\star}(#1)}}}
\newcommand{\REN}[1]{\ensuremath{\operatorname{\Sigma_{#1}}}}
\newcommand{\PN}[1]{\ensuremath{\operatorname{\Gamma_{+}(#1)}}}
\newcommand{\FN}[1]{\ensuremath{\operatorname{\Gamma(#1)}}}
\newcommand{\spec}[0]{\ensuremath{\operatorname{Spec}}}
\newcommand{\Id}[0]{\ensuremath{\operatorname{Id}}}
\newcommand{\specrel}[0]{\ensuremath{\operatorname{\bf Spec}}}
\newcommand{\sing}[0]{\ensuremath{\operatorname{Sing}}}
\newcommand{\Ess}[0]{\ensuremath{\operatorname{Ess}}}
\newcommand{\Hom}[0]{\ensuremath{\operatorname{Hom}}}
\newcommand{\ord}[0]{\ensuremath{\operatorname{Ord}}}
\newcommand{\p}[0]{\ensuremath{\operatorname{p}}}
\newcommand{\pinf}[2]{\ensuremath{\operatorname{p}_{#1}^{#2}}}
\newcommand{\f}[0]{\ensuremath{\operatorname{f}}}
\newcommand{\F}[0]{\ensuremath{\operatorname{F}}}
\newcommand{\g}[0]{\ensuremath{\operatorname{g}}}
\newcommand{\sym}[0]{\ensuremath{\operatorname{Sym}}}
\newcommand{\E}[0]{\ensuremath{\operatorname{E}}}
\newcommand{\ens}{\mathcal{E}ns}
\newcommand{\salg}{\s\!-\!\mathcal{A}lg}
\newcommand{\HS}[1]{\operatorname{HS}^{#1}_{\R\slash\s}}
\newcommand{\mJet}[3]{\operatorname{#1(#2)}_{#3}}
\newcommand{\arcSpace}[2]{\operatorname{#1(#2)}_{\infty}}
\newcommand{\mjp}[1]{\operatorname{\p}_{#1}}
\newcommand{\Arc}[2]{\operatorname{#1(#2)}_{\infty}}
\newcommand{\HSF}[3]{\operatorname{HS}^{#1}_{#2 \slash #3}}
\newcommand{\DerR}[2]{\operatorname{Der}^{#1}_{\s}(\R,#2)}
\newcommand{\red}[1]{#1_{red}}
\newcommand{\h}[0]{\ensuremath{\operatorname{h}}}
\newcommand{\hf}[2]{\h_{#1}(#2)}
\begin{document}

\title[Deformation of Spaces of m-jets]{Deforming Spaces of m-jets of  Hypersurfaces Singularities}


\author{Maximiliano Leyton-\'Alvarez}

\address{Instituto Matem\'atica y F\'isica, Universidad de Talca,
Camino Lircay S$\backslash$N, Campus Norte, Talca, Chile.}

\email{leyton@inst-mat.utalca.cl}

\date{\today}

\thanks{Partially supported by  FONDECYT $\rm{N}^\circ: 1170743$ }

\begin{abstract}
\selectlanguage{english}
\noindent {\bf  Deformation  of  Spaces of m-jets.}  
Let $\KK$ be an algebraically closed field of characteristic zero,
and $V$ a hypersurface defined by an irreducible polynomial $\f$ with coefficients in $\KK$. 

In this article we prove that an Embedded Deformation of $V$ which admits a Simultaneous
Embedded Resolution induces, under certain  conditions, a deformation of the the space of $m$-jets $V_m$, $m\geq 0$.
An example of an Embedded Deformation of $V$ which admits a Simultaneous
Embedded Resolution is a $\Gamma(\f)$-deformation of $V$, where $V$ has at most one 
isolated singularity, and $\f$ is non degenerate with respect to the Newton Boundary $\Gamma(\f)$.

\end{abstract}

\maketitle

\section{Introduction}

Let $V$ be an algebraic variety over a field $\KK$, and $m$ a positive integer. 
Intuitively the Space of $m$-jets, $V_m$, (resp. Space of arcs, $V_{\infty}$,) 
is the set of morphisms

\begin{center}
 $\spec \KK[t]/(t^{m+1})\rightarrow V$ (resp.  $\spec \KK[[t]] \rightarrow V$)
\end{center}
equipped with a ``natural''  structure of $\KK$-schema.  During the late $60s$
Nash studied these spaces as a tool to understand the local geometry of the singular locus   of $V$. 
He was specifically interested in understanding the {\it Essential Divisors over} $V$ (see Section \ref{de:ess-div}).
He constructed an injective application, known as the {\it Nash Application},  from the set of { \it Nash Components of} 
$V$ (see Section \ref{de:Nas-comp}) to the set of Essential Divisors. 
Then the question raised was:  Is the Nash application bijective?\\

Obtaining a complete answer to this question took many decades.  
In the year $2003$ Ishii and Kollar (see \cite{IsKo03}) showed the first example of a 
variety such that its Nash application is not surjective.  
It is worth mentioning that this example is a hypersurface of dimension $4$.  
In the year $2012$ Fernandez de Bobadilla and Pe-Pereira (see \cite{BoPe12a}) gave an affirmative
answer to the question for the case of surfaces over $\CC$.  Finally between the years $2012$ and $2013$ 
Johnson, Kollar, and de Fernex constructed examples of  hypersurfaces of dimension $3$ where the Nash 
application is not surjective (see  \cite{dFe13},  \cite{JoKo13}  and \cite{Kol12}).  
It is important to mention that many mathematicians worked hard on this problem, making valuable progress with respect to the problem.  
For example:   \cite{dFDo14}, \cite{Bob12}, \cite{Gon07}, \cite{Gole97}, \cite{IsKo03}, \cite{Ish05},\cite{Ish06}, 
\cite{Ley11a},  \cite{Ley14}, \cite{Lej80}, \cite{LeRe99}, \cite{Mor08}, \cite{Pet09}, \cite{Ple08}, \cite{PlPo06}, \cite{PlPo08}, \cite{PlSp12},
\cite{Reg95} etc.  
Unfortunately it is not possible to comment on and cite all the existing works.\\

However, despite all the progress made, giving an exact description 
of the image of the Nash application remains an open problem.\\

Informally speaking, hidden within the  Spaces of $m$-jets  and the Space of arcs 
lies much information on geometry of the subjacent variety.  For example:

In the year 1995, Kontsevich, using these spaces, introduced the {\it motivic integration} to resolve the Batyrev 
conjecture on the Calabi-Yau varieties (see \cite{Kon95}). 
For more references on motivic integration see \cite{Bli11},  \cite{DeLo99}, \cite{DelO02} and \cite{Loo02}.  

Other examples are found in the articles   \cite{EiMu04}, \cite{Mus01}  written by  Ein and Musta\c{t}\u{a} where, 
amongst other things, it is demonstrated that if $V$ is locally a complete intersection, then $V$
only has rational singularities (resp. log-canonical singularities) if and only if $V_m$ is irreducible (resp. equidimensional) 
for all $m\geq 0$.  

Another interesting application for the Spaces of $m$-jets and the Space of arcs is 
obtaining identities and invariants associated to $V$, for example see the articles \cite{BMS13}, and \cite{DeLo99}.\\  

One of the main subjects of \cite{Ley14} was the study of the following problem:  When a ``deformation'' of the 
 variety  $V$ induces a ``natural deformation'' of the Spaces of $m$-jets $V_m$ (for greater precision see Section  \ref{sec-def-prob}).
In general a deformation of $V$ does not induce a deformation of the spaces of $V_m$, see Example \ref{ex:no-def}, 
however there exist important families of examples in which this property is satisfied.  
For example if $V$ is locally a complete intersection, and only has an isolated singularity 
of  log-canonical type (see Proposition \ref{pr:def-mjets-lic}).\\  

The idea to study the Spaces of $m$-jets in families is very natural, 
for example Mourtada in \cite{Mou11} studies families of complex plane branches with constant
topological type, and obtains formulas for the calculation of the number and dimension of the irreducible components 
of the  Spaces of $m$-jets.\\

In \cite{Ley14}  it was obtained that if $V$ is a Pham-Briekson hypersurface defined by an
irreducible polynomial $\f$, and $W\rightarrow \spec \KK[[s]]$ is a  $\Gamma(\f)$-deformation (see Section \ref{se:new-def}) then  $W$ induces a deformation of $\red{(V_m)}$ 
 (for more details see Theorem \ref{th:pham-bri}). In this article we will generalize this result.\\ 

 Let $V\subset \AF_{\KK}^{n+1}$ be a hypersurface, and $S$ is a finite type $\KK$-scheme. 
 In this article we will
consider an Embedded deformation of $V\subset \AF_{\KK}^{n+1}$ over $S$ which admits a 
Simultaneous Embedded 
Resolution (see Section \ref{sec:Hip-Gen}), and we prove that this deformation induces a deformation of the
scheme  $V_m$  (see Theorems \ref{th:flat-mjet} and \ref{th:flat-mjet2}).\\

 An example of an Embedded Deformation which admits a Simultaneous Embedded 
Resolution is the following: Let $V\subset \AF_{\KK}^{n+1}$ be the hypersurface defined 
by an irreducible polynomial $\f$ non degenerate with respect to the Newton boundary $\FN{\f}$.
In addition we will suppose that $V$ has at
most one isolated singularity in the ``origin'' of  $\AF_{\KK}^{n+1}$, 
and that $V$ does not contain orbits
of the torus $T:=(\KK^{\star})^{n+1}$ of dimension greater than or equal to $1$.
So a $\Gamma(\f)$-deformation 
of $V$ admits a Simultaneous Embedded Resolution.\\

It is worth mentioning that under certain conditions the deformations that we will consider have a 
 property to preserve the topological type of $V$ (see \cite{Ger86}, \cite{Kou76}, \cite{OSh87}, \cite{Tei73}, \cite{LDRa76} and   \cite{LDRa76}).\\  
 
The article is organized as follows:\\

	In the section  {\it Preliminaries and Reminders}  we will briefly present the known definitions and results of 
	the Hasse-Schmidt derivations and the Space of $m$-jets and the Space of arcs relatives to a given scheme. 
	We will finish this section with a brief introduction to the Nash problem. 
	The purpose of this section is to make reading the article easier.\\ 

	The main results of this article will be developed in the section {\it Deformation of Spaces of $m$-jets}.
	In Section \ref{sec-def-prob} we will briefly explain the problem we will study.  
	Then, in Section \ref{sec:Hip-Gen}, we will give the general hypothesis 
	of the principal result of this article	and we will show that the  $\Gamma(\f)$-deformations 
	satisfy them.   
	Finally, in Section \ref{sec:def-hyp}, we will prove the mains results of this article. 
	We will finish the section with an application to the Nash problem.

\section*{Acknowledgments}  
I would like to give special thanks to Hussein Mourtada, Peter Petrov, 
Camille Pl\'enat and Mark Spivakovsky for all the fruitful discussions.
I would also like to thank Rachel Rogers for her unconditional support during this time. \\


\section{Preliminaries and Reminders}
\label{se:Pre-Rap}
\subsection{Hasse-Schmidt Derivations}
\label{ssec:h-s}
In this section we give some results and notions based on the Hasse-Schmidt derivations,
the {\it Relative Space of $m$-jets}, and the {\it Relative  Space of arcs}.  Here our goal is not to give an exhaustive 
overview of the theory, but to give an overview of the latter in the context which interests us.  
For more details, see \cite{Voj07}.\\

Let $\KK$ be an algebraically closed field of 
characteristic zero, and $\s$ a $\KK$-algebra and $\R$,$\Q$ two $\s$-algebras.\\

A {\it Hasse-Schmidt derivation of order} $m\in \ZZ_{\geq 0}$ from $\R$ to $\Q$ is an $m+1$-tuple  $(D_0,\cdots, D_m)$
where  $D_0:R\rightarrow Q$ is a homomorphism of $\s$-algebra, 
and  $D_i:R\rightarrow Q$, $1\leq i\leq m$ is a homomorphism of abelian groups, 
which satisfies the following properties:

\begin{enumerate}

 \item[(i)]  $D_i(p)=0$ for all $p\in \s$  and for all $1\leq i\leq m$;
 \item[(ii)]   For all the elements $x$ and  $y$  of $\R$ and for all integers $1\leq k \leq m$, we have:  
 \begin{center}
  $D_k(xy)=\sum_{i+j=k}D_i(x)D_j(y)$.
   \end{center}
 \end{enumerate}
 \vspace{0.1cm}
 
 We remark that if $\phi:Q\rightarrow Q'$ is a homomorphism of $\s$-algebra, and $(D_0,\cdots, D_m)$  a 
 Hasse-Schmidt derivation from $\R$ to $\Q$, then $(\phi\circ D_0,\cdots, \phi \circ D_m)$ is a 
 Hasse Schmidt derivation from $\R$ to $\Q'$.  Consequently we can define the following functor:

  \vspace{0.3cm}
 \begin{center}
  $\DerR{m}{\;\cdot\;}:\salg\rightarrow \ens;$ $\Q\mapsto \DerR{m}{\Q}$
 \end{center}
  \vspace{0.3cm}
   where $\salg$ (resp. $\ens $) is the category of $\s$-algebras (resp. of sets) and $\DerR{m}{\Q}$
  is the the set of Hasse-Schmidt derivations of the order $m$ of $\R$ to $\Q$.\\  
  
 In the same way, we can define the Hasse-Schmidt derivation of order $\infty$, and the functor $\DerR{\infty}{\;\cdot\;}$ by using, in place of  $m\!+\!1$-tuple 
 $(D_0,\cdots, D_m)$, an infinite sequence $D_0,D_1,\cdots $.\\

Let $\HS{m}$  be the $\R$-algebra quotient of the algebra $\sym( \displaystyle\mathop{\oplus}\limits_ {i=1}^{m} \displaystyle\mathop{\oplus}\limits_{ x\in R} \R d^{i}x)$
by the ideal $I$ generated by the union of the following sets:

\begin{itemize}
 \item $\{d^i(x+y)-d^i(x)-d^i(y)\mid x,y\in \R; \; 1\leq  i\leq  m\},$
 \item $\{d^i(s)\mid s \in \s; \; 1\leq i\leq  m\}$
 \item  $\{d^k(xy)-\sum_{i+j=k}d^i(x)d^i(y)\mid x,y\in \R; \; 1\leq k\leq  m\}$.\\
 \end{itemize}

We also remark that $\HS{m} $ 
is a graduated algebra by the graduation $d^ix\mapsto i$.  According to the definition of the algebra $\HS{m}$,
we naturally obtain a sequence of homomorphisms of graduated algebras:  

\vspace{0.3cm}
\begin{center}
 
$R:=\HS{0}\rightarrow \HS{1}\cdots\rightarrow\HS{m}\rightarrow \HS{m+1}\rightarrow \cdots$
\end{center}
\vspace{0.3cm}

We note $\HS{\infty }$ the inductive limit of the inductive system $\HS{m}$, which is to say:  

\vspace{0.3cm}
\begin{center}
 $\HS{\infty }:=\displaystyle\mathop{\lim}\limits_{m\rightarrow \infty}\HS{m}$.
\end{center}
 
\vspace{0.2cm}
\begin{proposition}[\cite{Voj07}] \label{pro:HS-repre}
The Hasse-Schmidt algebra $\HS{m}$,  $0\leq m\leq \infty$, represents the functor of   Hasse-Schmidt derivations  $\DerR{m}{\;\cdot\;}$,
which is to say:

\vspace{0.3cm}
\begin{center}
 $\DerR{m}{\Q}\cong \Hom_{\s}(\HS{m},Q)$,
\end{center}
\vspace{0.3cm}
where $\Q$ is any $\s$-algebra. 
\end{proposition}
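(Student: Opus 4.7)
The plan is to construct two mutually inverse natural transformations between the functors $\DerR{m}{\;\cdot\;}$ and $\Hom_{\s}(\HS{m},\;\cdot\;)$. The key observation is that the defining relations of the ideal $I$ in the construction of $\HS{m}$ have been chosen to be a verbatim transcription of the three axioms of a Hasse-Schmidt derivation, so the result should follow by a Yoneda-style argument using the universal property of the symmetric algebra.

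First, given a $\s$-algebra homomorphism $\phi : \HS{m} \to \Q$, I would extract a Hasse-Schmidt derivation by setting $D_0 := \phi|_{\R}$, where $\R$ sits inside $\HS{m}$ as the degree-zero part, and $D_i(x) := \phi(d^i x)$ for $1 \le i \le m$. Each $D_i$ is additive and kills $\s$ because $\phi$ sends the corresponding generators of $I$ to zero; similarly the Leibniz-type relation for $D_k(xy)$ follows from the relations $d^k(xy)-\sum_{i+j=k} d^i(x)d^j(y) \in I$. Hence the assignment lands in $\DerR{m}{\Q}$.

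Conversely, given $(D_0,\ldots,D_m) \in \DerR{m}{\Q}$, I would first use the $\s$-algebra structure: $D_0$ itself provides an $\s$-algebra map $\R \to \Q$, making $\Q$ into an $\R$-module via $D_0$. Then I would invoke the universal property of $\sym\bigl(\oplus_{i=1}^{m}\oplus_{x\in \R} \R\, d^i x\bigr)$ to define the unique $\R$-algebra morphism sending each formal symbol $d^i x$ to $D_i(x) \in \Q$. The crucial verification is that this morphism factors through the quotient by $I$, and this is precisely where the three Hasse-Schmidt axioms are used: additivity of $D_i$ kills the first family of generators, the vanishing of $D_i$ on $\s$ kills the second, and the Leibniz identity kills the third. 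This produces the desired element of $\Hom_{\s}(\HS{m},\Q)$.

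Checking that the two constructions are mutually inverse is then immediate: both compositions agree on the generators $\R$ and $\{d^i x\}$ of $\HS{m}$, respectively on the components of a HS-derivation. Naturality in $\Q$ is automatic from the construction, and the case $m = \infty$ follows by passing to the inductive limit, using that $\Hom$ out of a filtered colimit equals the corresponding limit of $\Hom$-sets. The only potential obstacle is the bookkeeping verification that no extra relations beyond those forced by the axioms are imposed, but since the generators of $I$ are in bijection with the axiom clauses, this reduces to a direct inspection rather than a substantive argument.
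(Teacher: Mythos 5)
Your proposal is correct. The paper itself gives no proof of this proposition---it is quoted directly from Vojta \cite{Voj07}---but your argument is precisely the standard universal-property proof found there: restriction of $\phi\in\Hom_{\s}(\HS{m},\Q)$ to the degree-zero part $\R$ and to the generators $d^ix$ yields a Hasse--Schmidt derivation because $\phi$ kills $I$, and conversely the generators of $I$ are in bijection with the axioms (note the paper's third relation contains a typo, $d^i(x)d^i(y)$ for $d^i(x)d^j(y)$, which you silently and correctly fix), so the induced $\R$-algebra map out of the symmetric algebra factors through the quotient; the case $m=\infty$ follows from $\Hom_{\s}(\varinjlim\HS{m},\Q)\cong\varprojlim\Hom_{\s}(\HS{m},\Q)$ exactly as you say.
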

The Hasse-Schmidt algebra  $\HS{m}$, $0\leq m\leq \infty$ satisfies the following properties of localization:

\begin{proposition}[\cite{Voj07}]
Let $0\leq m\leq \infty$ and consider  a multiplicative subset $L$ (resp. $L_1$)  of the algebra $\R$ (resp. $\s$).
Then we have:

\begin{enumerate}
 \item $\HS{m}[L^{\;-1}]\cong \HSF{m}{R[L^{\;-1}]}{\s}$;
 \item If the morphism $\s\rightarrow R$ factors through the canonical morphism $\s\rightarrow \s[L_1^{\;-1}]$, 
 then $\HS{m}\cong \HSF{m}{R}{\s[L_1^{\;-1}]}$.
 \end{enumerate}
\end{proposition}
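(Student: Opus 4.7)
The plan is to use Yoneda's lemma together with the representability established in Proposition \ref{pro:HS-repre}. In each part I verify that both $R$-algebras represent the same functor on $\s$-algebras.

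For part (1), by Proposition \ref{pro:HS-repre} and the universal property of localization,
\[
\Hom_{\s}(\HS{m}[L^{-1}], \Q) = \{(D_0,\dots,D_m) \in \DerR{m}{\Q} : D_0(L) \subset \Q^{\times}\}.
\]
It remains to identify this subset with $\operatorname{Der}^{m}_{\s}(\R[L^{-1}],\Q)$. Given such a $(D_0,\dots,D_m)$, the universal property of localization extends $D_0$ uniquely to $\widetilde D_0 : \R[L^{-1}] \to \Q$. The higher components are forced, by induction on $i$, by the Leibniz rule applied to $u \cdot u^{-1} = 1$ for $u \in L$: the relation
\[
\widetilde D_0(u)\,\widetilde D_i(u^{-1}) = -\sum_{j=1}^{i} \widetilde D_j(u)\,\widetilde D_{i-j}(u^{-1})
\]
uniquely determines $\widetilde D_i(u^{-1})$ since $\widetilde D_0(u)$ is a unit, and the value of $\widetilde D_i$ on an arbitrary element $r/u \in \R[L^{-1}]$ is then forced by Leibniz applied to $r = (r/u)\cdot u$. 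One then verifies that the formulas thus obtained are independent of the chosen representative and that $\widetilde D$ is additive and satisfies Leibniz on arbitrary products, giving the claimed bijection.

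For part (2), the hypothesis that $\s \to \R$ factors through $\s[L_1^{-1}]$ implies that for any $\s$-algebra $\Q$ equipped with a $\s$-algebra map $D_0: \R \to \Q$, the composition $\s \to \R \to \Q$ automatically factors through $\s[L_1^{-1}]$, so $\Q$ is canonically a $\s[L_1^{-1}]$-algebra and $D_0$ a $\s[L_1^{-1}]$-algebra map. I would then show that any Hasse-Schmidt derivation $(D_0,\dots,D_m)$ of $\R$ over $\s$ automatically vanishes on the image in $\R$ of elements of $\s[L_1^{-1}]$: for $\lambda = s u^{-1} \in \s[L_1^{-1}]$, with $s \in \s$, $u \in L_1$, and $\tilde\lambda, \tilde s, \tilde u$ their images in $\R$, applying $D_i$ to $\tilde\lambda \cdot \tilde u = \tilde s$ gives, by induction on $i$,
\[
D_0(\tilde u)\,D_i(\tilde\lambda) = D_i(\tilde s) - \sum_{k=1}^{i} D_k(\tilde u)\,D_{i-k}(\tilde\lambda) = 0,
\]
since $D_k(\tilde s) = D_k(\tilde u) = 0$ for $k \geq 1$. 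Invertibility of $D_0(\tilde u)$ in $\Q$ forces $D_i(\tilde\lambda) = 0$. The resulting identification of functors is natural in $\Q$, and Yoneda yields the claimed isomorphism of $\R$-algebras.

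The main obstacle I expect lies in part (1): verifying that the inductively defined extension $\widetilde D$ satisfies additivity and the full Leibniz rule on all pairs of elements of $\R[L^{-1}]$, rather than only on the particular identities used in its construction. This is routine but tedious bookkeeping, and it is the point at which the uniqueness of the extension is used to promote the construction into a well-defined Hasse-Schmidt derivation.
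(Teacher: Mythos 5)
Your argument is correct; note that the paper does not prove this proposition at all (it is quoted from Vojta's article), so the only comparison available is with the standard argument there. For part (1) your route --- extend $D_0$ by the universal property of localization, then force the higher components inductively from $u\cdot u^{-1}=1$ and from $r\cdot u^{-1}$ --- does work, but the ``routine but tedious bookkeeping'' you rightly flag (well-definedness, additivity, the full Leibniz rule) can be bypassed entirely. Using the identification $\DerR{m}{\Q}\cong\Hom_{\s}(\R,\Q[t]/(t^{m+1}))$ (resp.\ $\Hom_{\s}(\R,\Q[[t]])$ when $m=\infty$), which is Proposition \ref{pro:fonct-repre-mjet} of the paper and is logically independent of the localization statement, a Hasse--Schmidt derivation with $D_0(L)\subset\Q^{\times}$ is exactly an $\s$-algebra homomorphism $\R\to\Q[t]/(t^{m+1})$ carrying $L$ into units, since a truncated polynomial (or power series) is a unit if and only if its constant term is; the ordinary universal property of $\R[L^{\;-1}]$ then gives the bijection with $\Hom_{\s}(\R[L^{\;-1}],\Q[t]/(t^{m+1}))\cong\Hom_{\s}(\HSF{m}{\R[L^{\;-1}]}{\s},\Q)$ with no verification left to do. For part (2) your computation is right, and in fact simpler than you present it: no induction is needed, since every cross term $D_k(\tilde u)D_{i-k}(\tilde\lambda)$ with $k\geq 1$ vanishes outright because $u\in L_1\subset\s$. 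The cleanest packaging is to apply your identity to the universal derivation $(d^0,\dots,d^m)$ with values in $\HS{m}$ itself: the extra generators $d^i(\tilde\lambda)$ of the ideal defining $\HSF{m}{\R}{\s[L_1^{\;-1}]}$ already vanish in $\HS{m}$, because $\tilde u$ is a unit of $\R$ and hence of the $\R$-algebra $\HS{m}$, so the canonical surjection $\HS{m}\to\HSF{m}{\R}{\s[L_1^{\;-1}]}$ is an isomorphism; this sidesteps the slight awkwardness in your write-up of comparing functors defined on two different base categories ($\s$-algebras versus $\s[L_1^{\;-1}]$-algebras).
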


Let $f:X\rightarrow Y$ be a morphism of schemes.  Taking a covering by affine open subsets of $X$ 
compatible with a covering by affine open subsets  of $Y$, and by using the above localization properties, 
we can define a quasi-coherent sheaf, $\HSF{m}{\mathcal O_{X}}{\mathcal{O}_{Y}}$, $1\leq m\leq \infty$, of 
$\mathcal{O}_X$-algebras that does not depend on the choice of the open coverings.

\begin{definition}
 For each $0\leq m\leq \infty$, we place:
 \begin{center}
 $\mJet{X}{Y}{m}:=\specrel \HSF{m}{\mathcal O_{X}}{\mathcal{O}_{Y}}$
 \end{center}
where $\specrel \HSF{m}{\mathcal O_{X}}{\mathcal{O}_{Y}}$ is the relative spectrum of the quasi-coherent sheaf 
of $\mathcal O_{X}$-algebras  $\HSF{m}{\mathcal O_{X}}{\mathcal{O}_{Y}}$.
If $Y:=\spec \KK$, we place $X_{m}:=\mJet{X}{\spec \KK}{m}$.
\vspace{0.3cm}

For $0\leq m < \infty$  (resp. $m=\infty$),  the scheme $\mJet{X}{Y}{m}$ (resp. $\arcSpace{X}{Y}$) 
is called the Space of  $m$-jets  (resp. Space of arcs) of the morphisms $f:X\rightarrow Y$.   
\end{definition}

The following results justify the terminology ``Space of m-jets  and  Space of arcs of the morphism $f:X\rightarrow Y$''  
\vspace{0.2cm}

We remark that we can define the following applications:  

\vspace{0.5cm}
\begin{center}
 $\begin{array}{ccl}
 \DerR{m}{\Q}&\rightarrow & \Hom_{\s}(R,Q[t]/(t^{m+1}));\\
 
 (D_0,...,D_m)&\rightarrow & \left\{\begin{array}{l} 
 R\rightarrow Q[t]/(t^{m+1}); \\ x\mapsto D_0(x)+\cdots+D_m(x)t^m\end{array} \right.
                                                                                                      
\end{array}$
\end{center}
\vspace{0.5cm}

\begin{center}

$\begin{array}{ccl}
 \DerR{\infty}{\Q}&\rightarrow & \Hom_{\s}(R,Q[[t]]);\\
 
 D_0,D_1\cdots &\rightarrow & \left\{\begin{array}{l} 
 R\rightarrow Q[[t]]; \\ x\mapsto D_0(x)+D_1(x)t+\cdots\end{array} \right.
                                                                                                      
\end{array}$
\end{center}
\vspace{0.3cm}

Using the above applications we can demonstrate the following result:

\begin{proposition}[\cite{Voj07}]
\label{pro:fonct-repre-mjet}
 The functor $Q\mapsto \DerR{m}{Q}$, $0\!\leq\!  m\!< \!\infty$, (resp. $Q \mapsto \DerR{\infty}{Q}$)  
 is isomorphic to the functor $Q\mapsto \Hom_{\s}(R,Q[t]/(t^{m+1}))$ 
 (resp.   $Q\mapsto \Hom_{\s}(R,Q[[t]])$).  
\end{proposition}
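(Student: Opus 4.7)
My plan is to show that the explicit assignments displayed just before the statement are mutually inverse natural transformations of functors. The argument is essentially a direct verification; the content of the proposition is that the Hasse-Schmidt axioms are exactly what is needed for the associated map into $Q[t]/(t^{m+1})$ to be a $P$-algebra homomorphism.

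First I would treat the case $0 \leq m < \infty$ and define
\[
\Phi_Q : \DerR{m}{Q} \longrightarrow \Hom_{\s}(R, Q[t]/(t^{m+1})), \qquad (D_0,\dots,D_m) \mapsto \Big(x \mapsto \sum_{i=0}^m D_i(x)\, t^i\Big).
\]
To see that $\Phi_Q(D_0,\dots,D_m)$ is indeed a $\s$-algebra homomorphism, I would check additivity (which follows from each $D_i$ being a group homomorphism), $\s$-linearity (from $D_0$ being a $\s$-algebra homomorphism together with $D_i(s) = 0$ for $s \in \s$ and $i \geq 1$), and multiplicativity. The last point is the main computation: for $x,y \in R$, the coefficient of $t^k$ in $\Phi_Q(D)(xy)$ is $D_k(xy)$, while the coefficient of $t^k$ in $\Phi_Q(D)(x)\cdot \Phi_Q(D)(y)$, computed modulo $t^{m+1}$, is $\sum_{i+j=k} D_i(x) D_j(y)$. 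These agree precisely by axiom (ii).

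Next I would construct an inverse $\Psi_Q$. Given a $\s$-algebra homomorphism $\phi: R \to Q[t]/(t^{m+1})$, write uniquely
\[
\phi(x) = D_0(x) + D_1(x)\, t + \cdots + D_m(x)\, t^m,
\]
thereby defining maps $D_i : R \to Q$. Additivity of $\phi$ gives additivity of each $D_i$; the fact that $\phi$ is a $\s$-algebra homomorphism gives both that $D_0$ is a $\s$-algebra homomorphism and that $D_i(s) = 0$ for $s \in \s$, $i \geq 1$; and expanding the identity $\phi(xy) = \phi(x)\phi(y)$ and equating coefficients of $t^k$ produces the Leibniz-type axiom (ii). Hence $\Psi_Q(\phi) := (D_0,\dots,D_m)$ lies in $\DerR{m}{Q}$. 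By construction $\Phi_Q$ and $\Psi_Q$ are mutually inverse.

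Finally I would verify naturality in $Q$: if $\phi: Q \to Q'$ is a $\s$-algebra homomorphism, post-composition with $\phi$ on one side and with the induced map $Q[t]/(t^{m+1}) \to Q'[t]/(t^{m+1})$ on the other commute with $\Phi_Q$ and $\Phi_{Q'}$; this is immediate from the explicit formula. For the case $m = \infty$, exactly the same argument applies verbatim with $Q[t]/(t^{m+1})$ replaced by $Q[[t]]$ and the finite tuple by an infinite sequence $D_0, D_1, \dots$; the coefficient-wise multiplication in $Q[[t]]$ is a finite sum at each degree, so no convergence issue arises. The only step requiring care is the multiplicativity check, but it is a routine matching of coefficients and is not a genuine obstacle.
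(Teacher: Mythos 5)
Your proposal is correct and follows exactly the route the paper indicates: it takes the explicit assignment $(D_0,\dots,D_m)\mapsto\bigl(x\mapsto\sum_i D_i(x)t^i\bigr)$ displayed just before the statement, checks via the Hasse--Schmidt axioms that it lands in $\Hom_{\s}(R,Q[t]/(t^{m+1}))$, and inverts it by reading off coefficients. The paper itself only cites \cite{Voj07} and sketches this same construction, so there is nothing to add.
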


The following theorems are a consequence of Propositions \ref{pro:HS-repre} and  \ref{pro:fonct-repre-mjet}.

\begin{theorem} \label{th:prop-fonct-mjet}
Let $f:X\rightarrow Y$ be a morphism of schemes.  Then the Space of $m$-jets  of  $f:X\rightarrow Y$
represents the functor of $m$-jets, which is to say

\begin{center}
$\Hom_Y(Z\times_{\KK}\spec \KK[t]/(t^{m+1}), X)\cong \Hom_{Y}(Z,\mJet{X}{Y}{m})$, 
\end{center}
where $Z$ is any $Y$-scheme.  
\end{theorem}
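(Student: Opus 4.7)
The plan is to reduce the statement to the affine case, where it becomes a direct combination of Propositions~\ref{pro:HS-repre} and~\ref{pro:fonct-repre-mjet}, and then to globalize using the local nature of both sides of the desired bijection.

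First I would establish the affine case. Assume $X=\spec R$, $Y=\spec \s$, and $Z=\spec Q$, so that $R$ and $Q$ are $\s$-algebras. By the definition of the relative spectrum, one has
\begin{equation*}
\Hom_Y(Z,\mJet{X}{Y}{m}) \;\cong\; \Hom_{\s}\!\left(\HSF{m}{R}{\s},Q\right).
\end{equation*}
On the other hand, since $Z\times_{\KK}\spec \KK[t]/(t^{m+1})=\spec\bigl(Q[t]/(t^{m+1})\bigr)$ and this is naturally a $Y$-scheme through the structure map $\s\to Q \hookrightarrow Q[t]/(t^{m+1})$, one has
\begin{equation*}
\Hom_Y\!\left(Z\times_{\KK}\spec \KK[t]/(t^{m+1}),X\right) \;\cong\; \Hom_{\s}\!\left(R,Q[t]/(t^{m+1})\right).
\end{equation*}
Now Proposition~\ref{pro:HS-repre} identifies the left-hand side of the first display with $\DerR{m}{Q}$, while Proposition~\ref{pro:fonct-repre-mjet} identifies $\DerR{m}{Q}$ with the right-hand side of the second display. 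Chaining these isomorphisms gives the bijection in the affine case, and one checks directly from the explicit formulas recorded just before Proposition~\ref{pro:fonct-repre-mjet} that the bijection is natural in the $\s$-algebra $Q$.

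To globalize, I would use the standard argument that both functors $Z\mapsto \Hom_Y(Z\times_{\KK}\spec \KK[t]/(t^{m+1}),X)$ and $Z\mapsto \Hom_Y(Z,\mJet{X}{Y}{m})$ are Zariski sheaves on the category of $Y$-schemes. Choose an affine open covering $\{Y_\alpha\}$ of $Y$ and affine open coverings $\{X_{\alpha\beta}\}$ of $f^{-1}(Y_\alpha)$; the sheaf $\HSF{m}{\mathcal O_X}{\mathcal O_Y}$ is defined precisely so as to glue the local constructions $\HSF{m}{\mathcal O(X_{\alpha\beta})}{\mathcal O(Y_\alpha)}$ compatibly, thanks to the localization statements recorded in the previous proposition. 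An $m$-jet of $f$ over an affine $Z$ factors locally through some $X_{\alpha\beta}$ because $\spec Q[t]/(t^{m+1})$ has the same underlying topological space as $\spec Q$; this is the key point that lets the affine bijection sheafify. Gluing the affine isomorphisms via this observation yields the global bijection.

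The essentially non-formal step is this last topological remark, ensuring that a morphism from $Z\times_{\KK}\spec \KK[t]/(t^{m+1})$ to $X$ is determined by, and can be constructed from, its restrictions to the preimages of an affine open cover of $X$. Once this is granted, the rest is a diagram chase combining the two preceding propositions with the universal property of $\specrel$, so I do not expect any serious obstacle beyond bookkeeping of the gluing data.
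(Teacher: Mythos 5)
Your proposal is correct and follows essentially the same route as the paper, which simply asserts the theorem as a consequence of Propositions~\ref{pro:HS-repre} and~\ref{pro:fonct-repre-mjet}; you combine these in the affine case via the universal property of $\specrel$ and then globalize by the standard sheaf-gluing argument, using that $\spec Q[t]/(t^{m+1})$ and $\spec Q$ share the same underlying space. The only content you add beyond the paper's one-line justification is the (correct and standard) bookkeeping of the gluing step.
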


\begin{theorem}\label{th:prop-fonct-arc}  Let $f:X\rightarrow Y$ be a morphism of schemes.  
Then the Space  of arcs of  $f:X\rightarrow Y$   represents the functor of arcs, which is to say

\begin{center}
 $\Hom_Y(Z\widehat{\times}_{\KK}\spec \KK[[t]], X)\cong \Hom_{Y}(Z,\arcSpace{X}{Y})$
\end{center}
where $Z$ is any $Y$-scheme, and $Z\widehat{\times}_{\KK}\spec \KK[[t]]$ is  the formal completion of the scheme 
$Z\times_{\KK}\spec \KK[[t]] $ along the subscheme $Z\times_{\KK}\spec{\KK} $. 
\end{theorem}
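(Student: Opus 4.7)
The plan is to reduce to the affine case, apply Propositions \ref{pro:HS-repre} and \ref{pro:fonct-repre-mjet}, and then globalize. Take first $X = \spec \R$, $Y = \spec \s$, and $Z = \spec \Q$ an affine $Y$-scheme. By construction the formal completion $Z\widehat{\times}_{\KK}\spec \KK[[t]]$ is the direct limit (in the category of ringed spaces) of the nilpotent thickenings $Z\times_{\KK}\spec \KK[t]/(t^{n+1})$, so that
\begin{align*}
\Hom_Y\bigl(Z\widehat{\times}_{\KK}\spec \KK[[t]], X\bigr)
&= \varprojlim_n \Hom_Y\bigl(Z\times_{\KK}\spec \KK[t]/(t^{n+1}), X\bigr) \\
&= \varprojlim_n \Hom_{\s}\bigl(\R, \Q[t]/(t^{n+1})\bigr) = \Hom_{\s}(\R, \Q[[t]]).
\end{align*}
By Proposition \ref{pro:fonct-repre-mjet} this last set is naturally in bijection with $\DerR{\infty}{\Q}$, which by Proposition \ref{pro:HS-repre} is in turn identified with $\Hom_{\s}(\HSF{\infty}{\R}{\s}, \Q) = \Hom_Y(Z, \arcSpace{X}{Y})$. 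This establishes the affine case.

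For the global case I would choose compatible affine open covers $\{V_j\}$ of $Y$, $\{U_i\}$ of $X$ with $f(U_i) \subseteq V_{j(i)}$, and $\{W_k\}$ of $Z$, each $W_k$ mapping into a single $V_j$. Since $Z\widehat{\times}_{\KK}\spec \KK[[t]]$ has the same underlying topological space as $Z\times_{\KK}\spec \KK \cong Z$, the preimage of each $U_i$ under any morphism $\varphi: Z\widehat{\times}_{\KK}\spec \KK[[t]] \to X$ is an open of $Z$, and after refining the $W_k$'s one may assume that each $\varphi(W_k\widehat{\times}_{\KK}\spec \KK[[t]])$ factors through some $U_i$. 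The affine case then applies to each piece, and the resulting bijections glue because the sheaf $\HSF{\infty}{\mathcal O_{X}}{\mathcal{O}_{Y}}$ was constructed precisely as a quasi-coherent $\mathcal O_X$-algebra so that its formation commutes with localization on $\R$ (and on $\s$), so that affine arc-space computations patch together. The functoriality in $Z$ is automatic from the $\Hom$-formulation.

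The main obstacle will be the globalization step: one must justify carefully that a morphism from the formal scheme $Z\widehat{\times}_{\KK}\spec \KK[[t]]$ to $X$ is both determined by and assembled from its restrictions over an open cover of $Z$ (together with its restriction to each affine piece of $X$), and that the corresponding morphisms into $\arcSpace{X}{Y}$ glue accordingly. This reduces, via the localization properties of the Hasse-Schmidt algebra recalled before the definition of $\mJet{X}{Y}{m}$, to checking compatibility of the formal completion with open restriction in $Z$; both matters are essentially formal once the quasi-coherent sheaf structure of $\HSF{\infty}{\mathcal O_{X}}{\mathcal{O}_{Y}}$ is in hand.
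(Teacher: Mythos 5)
Your argument is correct and follows essentially the route the paper intends: the paper gives no proof beyond asserting that the theorem is a consequence of Propositions \ref{pro:HS-repre} and \ref{pro:fonct-repre-mjet} (deferring details to \cite{Voj07}), and your write-up is exactly that derivation in the affine case followed by the standard gluing via the localization properties of the Hasse--Schmidt algebra. No substantive difference in approach.
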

The Spaces of $m$-jets  and the Space of arcs of a morphism satisfy  the following  base extension  property:  

\begin{proposition}[\cite{Voj07}]
 \label{pr:mjets-chang-base}
  Let $f:X\rightarrow Y$ and $Y'\rightarrow Y$ be two morphisms of schemes, and we consider $X':=X\times_Y Y'$.  
  Then  $\mJet{X'}{Y'}{m}\cong  \mJet{X}{Y}{m}\times_YY'$,  over $Y'$ for all $0 \leq m\leq \infty$. In 
  particular there exists a commutative diagram:

 \begin{center}
\begin{tabular}{lr}

 $
\xymatrix @!0 @R=1.5pc @C=2pc{  
\mJet{X'}{Y'}{m}\ar@{->}[rrrr] \ar@{->}[dd]&& & 
& \mJet{X}{Y}{m} \ar@{->}[dd]\\
   &&\square&& \\
  Y' \ar@{->}[rrrr] &&& &  Y 
}$   
\end{tabular}
\end{center}
\end{proposition}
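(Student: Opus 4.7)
My plan is to prove this by the functor of points, applying Yoneda's lemma after identifying the functors represented by the two sides. The key ingredient available to us is Theorem \ref{th:prop-fonct-mjet} (and its arc-analogue Theorem \ref{th:prop-fonct-arc} for $m=\infty$), which expresses $\mJet{X}{Y}{m}$ as the relative $m$-jet functor.

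First, I would fix a test $Y'$-scheme $Z$ and compute $\Hom_{Y'}(Z,\mJet{X}{Y}{m}\times_YY')$. By the universal property of the fiber product over $Y$, a $Y'$-morphism from $Z$ to $\mJet{X}{Y}{m}\times_YY'$ is the same as a $Y$-morphism $Z\to \mJet{X}{Y}{m}$ (the structural morphism $Z\to Y'$ being already fixed and determining the composition $Z\to Y'\to Y$). Applying Theorem \ref{th:prop-fonct-mjet}, this set is in natural bijection with
\[ \Hom_{Y}\bigl(Z\times_{\KK}\spec\KK[t]/(t^{m+1}),\,X\bigr). \]

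Next, I would compute the other side: again by Theorem \ref{th:prop-fonct-mjet} applied this time to the morphism $f':X'\to Y'$, we have
\[ \Hom_{Y'}(Z,\mJet{X'}{Y'}{m})\cong \Hom_{Y'}\bigl(Z\times_{\KK}\spec\KK[t]/(t^{m+1}),\,X'\bigr). \]
Now I use $X'=X\times_YY'$ together with the universal property of the fiber product once more: a $Y'$-morphism from $W:=Z\times_{\KK}\spec\KK[t]/(t^{m+1})$ to $X\times_YY'$ amounts to a $Y$-morphism $W\to X$ (here $W$ is viewed as a $Y'$-scheme via the first projection followed by the structural map $Z\to Y'$). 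This identifies the right-hand side with $\Hom_Y(Z\times_{\KK}\spec\KK[t]/(t^{m+1}),X)$, matching the computation above.

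Thus the two functors on $Y'$-schemes agree naturally in $Z$, and Yoneda's lemma supplies a canonical isomorphism $\mJet{X'}{Y'}{m}\cong \mJet{X}{Y}{m}\times_YY'$ of $Y'$-schemes; the commutative square in the statement is exactly the projection to $Y'$ together with the base-change morphism to $\mJet{X}{Y}{m}$. The case $m=\infty$ is identical, replacing $\spec\KK[t]/(t^{m+1})$ by $\spec\KK[[t]]$ and using formal completions as in Theorem \ref{th:prop-fonct-arc}; one only needs to note that the formation of the formal completion of $Z\times_{\KK}\spec\KK[[t]]$ along $Z\times_{\KK}\spec\KK$ is compatible with regarding $Z$ as a $Y'$-scheme instead of a $Y$-scheme. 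The main technical point to keep track of is precisely this bookkeeping of base categories: ensuring that the $\KK$-product $Z\times_{\KK}\spec\KK[t]/(t^{m+1})$ is given the correct $Y$- and $Y'$-scheme structures (both coming from the first projection to $Z$) so that the two applications of the universal property of the fiber product line up; beyond that, the argument is a direct chain of canonical bijections.
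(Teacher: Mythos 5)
Your argument is correct. Note first that the paper itself gives no proof of this proposition: it is quoted from \cite{Voj07}, so there is no internal argument to compare against. Your functor-of-points proof is the standard one and is complete: both sides, evaluated on a $Y'$-scheme $Z$, are identified with $\Hom_Y(Z\times_{\KK}\spec\KK[t]/(t^{m+1}),X)$ --- the left side via Theorem \ref{th:prop-fonct-mjet} for $f':X'\to Y'$ followed by the adjunction $\Hom_{Y'}(W,X\times_YY')\cong\Hom_Y(W,X)$, the right side via the same adjunction for $\mJet{X}{Y}{m}\times_YY'$ followed by Theorem \ref{th:prop-fonct-mjet} for $f$ --- and Yoneda then produces the isomorphism over $Y'$, with the Cartesian square coming from the two projections. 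Your attention to which structure morphisms give $Z\times_{\KK}\spec\KK[t]/(t^{m+1})$ its $Y$- and $Y'$-scheme structures is exactly the bookkeeping that needs to be checked, and the $m=\infty$ case goes through with Theorem \ref{th:prop-fonct-arc} as you say. For comparison, Vojta's own proof is the affine-local algebraic counterpart of the same statement: a base-change isomorphism for the Hasse-Schmidt algebra, $\HSF{m}{\R\otimes_{\s}\s'}{\s'}\cong\HS{m}\otimes_{\s}\s'$, which globalizes to the relative spectra; the two routes are equivalent, yours staying at the level of represented functors and his at the level of the representing objects.
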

The symbol  $\square$  of the diagram is in order to explicitly indicate that the 
commutative diagram is a {\it Cartesian square}.  
\subsection{The Space of arcs and the Nash Problem.}

\label{ssec:nash} 
 In this section we will give the basic definitions of the Nash problem.\\

Let $\KK$ be an algebraically closed field of characteristic zero, $V$ a normal 
algebraic variety over $\KK$, and $\pi:X\rightarrow V$ 
a divisorial resolution of $V$.\\

\subsubsection{The Nash Components}
\label{de:Nas-comp}

We recall that $V_{\infty}$ is the Arcs Space  over  $V$, and that the $K$-points of $V_{\infty}$
are in bijective correspondence with the $K$-arcs over  $V$ where  $K$ is a field extension of $\KK$. 
Let   $\KK_{\alpha}$  be the residual field of the point $\alpha\in V_{\infty}$.  By abuse of notation we also note $\alpha$ the $\KK_{\alpha}$-arc
which corresponds to the point $\alpha$. 
Let $\p:V_{\infty}\rightarrow V$  be the canonical projection $\alpha\mapsto \alpha(0)$ where $0$ is the closed point of $\spec \KK_{\alpha}[[t]]$.
We remark that $\p$ is a morphism.
 The irreducible components of $V_{\infty}^{s}:= \p^{-1}(\sing V)$ are called  the {\it Nash Components of} $V$. 
We note $\mathcal{CN}(V)$ the set of   Nash  Components  of $V$.

\subsubsection{The Essential Divisors}
\label{de:ess-div}

Given any desingularization $\pi':X'\rightarrow V$, the birational application  $(\pi')^{-1}\circ\pi:X\dashrightarrow X'$
is well defined in codimension $1$ ($X$ is a normal variety). 
If $\E$ is an irreducible component of the exceptional fiber of $\pi$, then there exists an open subset $\E^{0}$ of $\E$ over which the application $(\pi')^{-1}\circ\pi$ 
is well defined.

 The divisor $\E$ is called {\it Essential Divisor over} $V$ if for all desingularization $\pi'$ the adherence 
$\overline{(\pi')^{-1}\circ\pi(\E^0)}$ is an irreducible component of $(\pi')^{-1}(\sing V)$,  where $\sing V$  is the singular locus of $V$. 
We note that $\Ess(V)$ the set of essential divisors over $V$.

\subsubsection{The Nash Application}
\label{de:nash-appl}

Given $C\in \mathcal{CN}(V)$, let  $\alpha_{C}$ be the generic point of $C$. 
Nash showed that the application $\mathcal{N}_{V}:\mathcal{CN}(V)\rightarrow \Ess(V)$
which associates to  $C\in \mathcal{CN}(V)$  the adherence  $\overline{\{\widehat{\alpha}_{C}(0)\}}$
is a well defined injective  application (see \cite{Nas95}),
where $\widehat{\alpha}_{C}$  is the lifting to $X$ of the generic point $\alpha_{C}$, which is to say $\pi\circ\widehat{\alpha}_{C}=\alpha_{C}$. 
The Nash problem consists of the study of the surjectivity of $\mathcal{N}_{V}$.\\

In the case of surfaces, the problem remained open until the year $2012$, when Fernandez de Bobadilla and Pe-Pereira
in the article \cite{BoPe12a} showed that the Nash application is bijective for all singularities of surfaces over $\CC$.  
 In the year 2003, Ishii and Kollar   discovered the first example of a
$V$ variety such that the Nash Application $\mathcal{N}_V$ is not bijective; this variety is a 
hypersurface of $\AF^5_{\KK}$ having a unique isolated singularity (see  \cite{IsKo03}).  The examples of varieties of dimension three where 
the Nash application is not bijective appeared during the years $2012$ and $2013$ (see the articles \cite{Kol12}, \cite{dFe13}); 
these examples are hypersurfaces of $\AF^4_{\KK}$ having a unique isolated singularity.  We can find more examples, 
for dimensions greater than or equal to three, in the article \cite{JoKo13}.

\section{ Deformation  of Spaces of m-jets}
\subsection{Definition of the main problem}
\label{sec-def-prob}

 Let $\KK$ be an algebraically closed field of characteristic zero, $V$ a variety over $\KK$, $S$ a $\KK$-scheme, and $0\in S$ a closed point. 
Let us consider a deformation $W$ of $V$ over $S$, which is to say a commutative diagram (Cartesian square):

 \begin{center}
\begin{tabular}{lr}

 $
\xymatrix @!0 @R=1.5pc @C=2pc{  
V\ar@{^{(}->}[rr] \ar@{->}[dd] & 
& W \ar@{->}[dd]^{\varrho}\\
   &\square& \\
  0 \ar@{^{(}->}[rr] & &  S
}$   
\end{tabular}  
\end{center} where $\varrho:W\rightarrow S$ is flat, and $V\cong W\times_S\{0\}$.  Using Proposition \ref{pr:mjets-chang-base}
we obtain the following commutative diagram:

\begin{center}
\begin{tabular}{lr}

 $
\xymatrix @!0 @R=1.5pc @C=2pc{  
V_m\ar@{^{(}->}[rr] \ar@{->}[dd] & 
& \mJet{W}{S}{m} \ar@{->}[dd]^{\varrho_m}\\
   &\square& \\
  0 \ar@{^{(}->}[rr] & &  S
}$   
\end{tabular}
\end{center}
The question we ask ourselves is: when is this diagram a deformation?  In other words, When 
is the morphism $\varrho_m:\mJet{W}{S}{m}\rightarrow S$ flat?\\

In general the morphism $\varrho_m$  is not flat.  

\begin{example}
\label{ex:no-def}

Let $V\subset \AF_{\CC}^3$ be the hypersurface give by the equation $x^4+y^4+z^4=0$, and $W$ the hypersurface of $\AF_{\CC}^3\times \AF_{\CC, 0}$
given by the equation $x^4+y^4+z^4+s=0$.  It is not difficult to verify that the morphism 
$W\rightarrow \AF_{\CC, 0}; (x,y,z,s)\mapsto s$ is a deformation of $V$. 

Let  $p\in \AF_{\CC,0}-\{0\}$, and $W_p:=W\times_S\{p\}$.  
	The fiber $W_p$  is a smooth variety, then the Space of $3$-jets $(W_p)_3$ is irreducible. 
	But the space of $3$-jets $V_3 $ is not equidimensional, which implies that $\mJet{W}{S}{3}\rightarrow S$ is not flat.  
\end{example}

The following result is a direct consequence of the results of
Ein and  Musta\c{t}\u{a} stated in the introduction, and the local criterion for flatness.  For more details see \cite{Ley14}:

\begin{proposition}
\label{pr:def-mjets-lic}
Let $V$ be  a locally  complete intersection variety. 
We suppose that $V$ has at most one isolated singularity of  log-canonical type.  Then the morphism $\mJet{W}{S}{m} \rightarrow S$ 
is a deformation of  $V_m$ for all $0 \leq m\leq \infty$.  
\end{proposition}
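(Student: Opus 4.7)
My plan is to reduce the flatness of $\mJet{W}{S}{m}\to S$ to the local criterion of flatness (in its ``miracle flatness'' form), using the Ein--Musta\c{t}\u{a} equidimensionality as the key ingredient. Concretely, I would work Zariski-locally: since $V$ is a local complete intersection and $W\to S$ is flat with $V$ as the fiber over $0\in S$, the total space $W$ is again a local complete intersection, so locally one can write $W=V(f_1,\dots,f_r)\subset \AF^{n}_{\s}$ for some $\s=\mathcal{O}_{S,0}$ and a regular sequence $f_1,\dots,f_r$ of relative codimension $r=n-\dim V$.

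Next I would use the explicit description of the relative jet space: by Proposition \ref{pro:HS-repre} together with the discussion of Hasse--Schmidt derivations in Section \ref{ssec:h-s}, the scheme $\mJet{W}{S}{m}$ is cut out inside $\mJet{\AF^{n}_{S}}{S}{m}=\AF^{n(m+1)}_{S}$ by the $r(m+1)$ equations obtained by collecting the coefficients of $t^0,\dots,t^m$ in $f_j\bigl(\sum_k x_k^{(i)}t^k\bigr)$, $1\leq j\leq r$. This gives at worst codimension $r(m+1)$, hence the generic lower bound $\dim \mJet{W}{S}{m}\geq \dim S+(m+1)\dim V$.

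For the matching upper bound, I would invoke the cited Ein--Musta\c{t}\u{a} theorem: since $V$ is locally a complete intersection with at most an isolated log-canonical singularity, $V_m$ is equidimensional of dimension $(m+1)\dim V$ for every $m\geq 0$. The generic fibers of $W\to S$ are smooth (after shrinking $S$), so their $m$-jet spaces are also equidimensional of the same dimension. By upper semi-continuity of fibre dimension, every fibre of $\mJet{W}{S}{m}\to S$ over a small neighborhood of $0$ has dimension exactly $(m+1)\dim V$; hence the codimension of $\mJet{W}{S}{m}$ in $\AF^{n(m+1)}_{S}$ is precisely $r(m+1)$. Equivalently, the $r(m+1)$ defining equations form a regular sequence in $\s[x_k^{(i)}]$, so $\mJet{W}{S}{m}$ is a relative complete intersection over $\s$, and therefore flat over $S$. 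Equivalently, one applies miracle flatness: $\mJet{W}{S}{m}$ is Cohen--Macaulay, $S$ is regular near $0$, and the fibre dimensions are constant.

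The main obstacle is controlling the equidimensionality of the fibres over points $s\neq 0$ near $0$: although the generic fibre is smooth, the argument needs that nothing degenerate happens in between. This is handled by openness of the locus of log-canonical singularities under flat deformation (so the hypothesis propagates to a neighborhood of $0$), or directly by the local criterion of flatness applied only at points of $V_m\subset \mJet{W}{S}{m}$, where equidimensionality of $V_m$ together with Cohen--Macaulayness is enough. Finally, the case $m=\infty$ follows from the finite-level cases by writing $\arcSpace{W}{S}=\varprojlim_m \mJet{W}{S}{m}$ and noting that an inverse limit of $\s$-flat modules along surjective transition maps is $\s$-flat.
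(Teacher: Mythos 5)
Your argument is essentially the one the paper intends: the paper offers no proof of this proposition, saying only that it is a direct consequence of the Ein--Musta\c{t}\u{a} results quoted in the introduction together with the local criterion for flatness (details deferred to \cite{Ley14}), and your route --- present $\mJet{W}{S}{m}$ locally as a relative complete intersection cut out by $r(m+1)$ equations in $\AF^{n(m+1)}_{S}$, use Ein--Musta\c{t}\u{a} equidimensionality to see that the special fibre $V_m$ has the expected dimension $(m+1)\dim V$, and conclude by the local (``miracle'') flatness criterion --- is exactly that argument. Two remarks. First, your ``main obstacle'' about fibres over $s\neq 0$ is not an obstacle at all: the slicing form of the local criterion (a sequence in a ring flat over $\s$ whose images form a regular sequence in the special fibre is itself regular and gives a flat quotient) uses only the dimension of the fibre over $0$, as you yourself note in the alternative you give. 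Second, your justification of the case $m=\infty$ points the limit the wrong way: $\arcSpace{W}{S}=\varprojlim_m \mJet{W}{S}{m}$ as schemes corresponds to a \emph{direct} limit of the Hasse--Schmidt coordinate rings, and it is the filtered colimit of flat $\s$-algebras that is flat; an inverse limit of flat modules, even with surjective transition maps, need not be flat. With that correction the proof is complete.
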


In this article we will generalize the following result.\\

Let $V\subset \AF_{\KK}^n$ be the hypersurface given  by polynomial  $\f \in \KK[x_1,...,x_n] $ and $W$ the hypersurface
given by formal power  series $\F=\f+\sum s^j\g_j$, $g_j\in \KK[x_1,...,x_n]$.  It is not difficult to verify that the morphism $W\rightarrow S:=\spec
\KK[[s]]; (x_1,...,x_n,s)\mapsto s$  is flat.  We proved in \cite{Ley14} the following result:
 
\begin{theorem}[\cite{Ley14}]
\label{th:pham-bri}
Let $\f:=x_1^{a_1}+x_2^{a_2}+\cdots+x_2^{a_k}+\cdots+ x_n^{a_n}$, $a_k>1$,  and we suppose that the polynomials $\g_i$, $i\geq 1$, 
belong to the integral closure of the ideal generated by $x_1^{a_1},x_2^{a_2},\cdots, x_n^{a_n}$.  
Then the morphism $\mjp{m}  :  \red{(\mJet{W}{S}{m})}\rightarrow S:=\spec \KK[[s]]$  is flat for all $0\leq m\leq \infty$. 
In other words, $\red{(\mJet{W}{S}{m})}$ is a deformation of $\red{(V_m)}$.  
\end{theorem}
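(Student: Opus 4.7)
The plan is to reduce the flatness of $\mjp{m}$ to a statement about irreducible components. Since $S=\spec \KK[[s]]$ is the spectrum of a discrete valuation ring with uniformizer $s$, a finitely generated $\KK[[s]]$-module is flat if and only if multiplication by $s$ is injective. Applied to the coordinate ring of $\red{(\mJet{W}{S}{m})}$, this is equivalent to asking that no irreducible component of $\red{(\mJet{W}{S}{m})}$ be contained in the special fibre, which by Proposition~\ref{pr:mjets-chang-base} is $\red{(V_m)}$. So the goal becomes: show that every irreducible component of $\red{(V_m)}$ arises as the special fibre of an irreducible component of $\red{(\mJet{W}{S}{m})}$ that dominates $S$.

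First I would write down explicit defining equations. Using the Hasse--Schmidt description from Proposition~\ref{pro:HS-repre}, the scheme $\mJet{W}{S}{m}$ is cut out in $\AF^{n(m+1)}_{\KK}\times S$ by the coefficients $G_0,\ldots,G_m$ of
\begin{equation*}
\F\Bigl(\sum_{j=0}^{m} x_{1,j}t^j,\ldots,\sum_{j=0}^{m} x_{n,j}t^j\Bigr)\equiv G_0+G_1 t+\cdots+G_m t^m\pmod{t^{m+1}}.
\end{equation*}
Separating the $s$-dependence gives $G_k=F_k+\sum_{l\ge 1} s^l H_{k,l}$, where $F_0,\ldots,F_m$ are the equations of $V_m$ coming from $\f$ and the $H_{k,l}$ encode the contributions of $\g_l$.

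Next I would invoke the known combinatorial description of the irreducible components of $\red{(V_m)}$ for a Pham--Brieskorn hypersurface: each component is indexed by a tuple of orders of vanishing $(o_1,\ldots,o_n)$ compatible with $\f$, and its generic arc has coordinate $x_i(t)$ of $t$-order exactly $o_i$. The key step is lifting such a generic arc to a generic point of $\mJet{W}{S}{m}$ dominating $S$. This is where the hypothesis that each $\g_l$ lies in the integral closure $\overline{(x_1^{a_1},\ldots,x_n^{a_n})}$ is essential: by the valuative criterion for integral dependence, for any arc $(x_i(t))$ one has
\begin{equation*}
\ord_t\bigl(\g_l(x(t))\bigr)\;\ge\;\min_{1\le i\le n}\ord_t\bigl(x_i(t)^{a_i}\bigr).
\end{equation*}
On a generic arc of a given component, the right-hand side equals the $t$-order of the leading monomials of $\f(x(t))$, so each perturbation $s^l H_{k,l}$ contributes to $G_k$ at a $t$-order not less than the leading part of $F_k$.

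The main obstacle is turning this order comparison into the assertion that the component in question extends to an irreducible component of $\red{(\mJet{W}{S}{m})}$ over all of $S$. The strategy I would follow is to define a weighting on $\KK[[s]][x_{i,j}]$ using the tuple $(o_1,\ldots,o_n)$ (together with a suitable weight on $s$), compute the initial forms of $G_0,\ldots,G_m$ with respect to this weighting, and verify that they coincide with the initial forms of the $F_0,\ldots,F_m$. The integral closure hypothesis is exactly what ensures that every perturbation term has strictly higher weight and therefore does not contribute to the initial ideal. A semicontinuity argument on the Hilbert function then shows that the dimension of the component is preserved as $s$ varies, so each component of $\red{(V_m)}$ is the specialisation of a component of $\red{(\mJet{W}{S}{m})}$ dominating $S$, which is the required flatness. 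The case $m=\infty$ follows by passing to the inverse limit, provided that the weight analysis is done uniformly in $m$.
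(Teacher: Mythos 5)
Your opening reduction is sound: over the discrete valuation ring $\KK[[s]]$ flatness of the reduced jet scheme is equivalent to no irreducible component of $\red{(\mJet{W}{S}{m})}$ lying in the special fibre, and the $m=\infty$ case does follow from the finite cases by a direct limit. But the core of the argument has a genuine gap, in two places. First, you invoke a ``known combinatorial description of the irreducible components of $\red{(V_m)}$'' for a Pham--Brieskorn hypersurface, indexed by order tuples $(o_1,\dots,o_n)$. No such classification is established (or used) here; determining the irreducible components of $V_m$ for these hypersurfaces is itself a hard problem, and your argument needs not just the order data of a generic point but explicit control of the defining ideal of each component. Second, and more seriously, the step that is supposed to finish the proof --- that each component of $\red{(V_m)}$ lies in the closure of the generic fibre --- is not actually carried out. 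The valuative criterion for integral closure does give $\ord_t\g_l(x(t))\geq\min_i a_i o_i$ on a jet of orders $(o_1,\dots,o_n)$, but this only shows that the perturbations do not disturb the equations $G_k$ for $k<\min_i a_i o_i$; the component is cut out by all of $F_0,\dots,F_m$, and for $k\geq\min_i a_i o_i$ the terms $H_{k,l}$ genuinely enter. Matching initial forms of the chosen generators $G_k$ and $F_k$ with respect to a weighting does not determine the initial ideal (one would need a Gr\"obner-type flatness statement for the full ideal), and semicontinuity of fibre dimension gives an inequality in the wrong direction: it bounds the special fibre from below by the generic fibre, which does not prevent an extra component of $\red{(\mJet{W}{S}{m})}$ from sitting entirely over $s=0$.

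The route taken in the paper (and in \cite{Ley14}, where this theorem is proved) avoids both difficulties by never analysing the components of $V_m$ directly. The integral-closure hypothesis is used only to guarantee that the toric morphism $X(\Sigma_{\f})\times S\rightarrow\AF^{n}_S$ is a \emph{simultaneous embedded resolution} of $W$; upstairs the total transform is a normal crossing divisor relative to $S$, its jet equations are monomial and independent of the base, so $\mJet{\widetilde{W}^t}{S}{m}\rightarrow S$ is flat (Proposition \ref{pr:BW-flat}). Properness then lets one lift jets and show $\mJet{\widetilde{W}^t}{S}{m}\rightarrow\mJet{W}{S}{m}$ is dominant (Proposition \ref{pr:morf-dom}), so every component of $\red{(\mJet{W}{S}{m})}$ dominates $S$, which is exactly the flatness criterion you reduced to. If you want to salvage your direct approach, the missing ingredient is precisely a substitute for this domination statement; without it the weight computation remains a heuristic.
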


\subsection{Simultaneous Embedded Resolution} \label{sec:Hip-Gen}

Let $\KK$ be an algebraically closed field of characteristic zero, $A$ a local
$\KK$-algebra of finite type and
$V$ a hypersurface of $\AF_{\KK}^{n+1}$. We suppose that $W$ is an
{\it Embedded Deformation of $V$ over $S:=\spec A$}, 
that is we have the following commutative diagram:

\begin{center}

$\xymatrix @!0 @R=1.5pc @C=2pc{  
 & V \ar@{^{(}->}[rr] \ar@{->}[dd] && W \ar@{^{(}->}[rr] \ar@{->}[dd]^{\varrho} &&
 \AF_{S}^{n+1} \ar@{->}[lldd] &\hspace{3cm}:=\spec A[x_1,...,x_{n+1}]\\
  & & \square&  &&& \\
  0:= \hspace{0.3cm}&\spec \KK \ar@{^{(}->}[rr] && S&&&
}$   
\end{center}
where the morphism $\varrho$ is flat.\\

In the following we will define what we mean by {\it Simultaneous Embedded
Resolution} of an embedded deformation.  \\

We consider a proper birational morphism $\varphi: \widetilde{\AF}_{S}^{n+1}\rightarrow \AF_{S}^{n+1}$  
such that $\widetilde{\AF}_{S}^{n+1}$ is formally smooth over $S$, and we note for   $\widetilde{W}^{s}$ and
$\widetilde{W}^{t}$  the strict and total 
transform of $W$ in $\widetilde{\AF}_{S}^{n+1}$ respectively.
 We will say that the embedded deformation of $V$ over $S$ admits a  simultaneous embedded
resolution if the morphisms $\widetilde{W}^{s}\rightarrow W$ is a very weak simultaneous 
resolution and 
$\widetilde{W}^{t}$ is a  normal crossing divisor relative to $S$, that is to say, 
the induced morphism
$\widetilde{W}^{t}\rightarrow S$    is flat and for each $p\in \widetilde{W}^{t}$ there 
exists a Zariski open affine neighborhood $U\subset \tilde{\AF}_{S}^{n+1}$  of $p$, and
an \'etale morphism $\phi$.

 \begin{center}

$\xymatrix @!0 @R=1.5pc @C=2pc{  
 U \ar@{->}[rr]^{\phi} \ar@{->}[rdd]&& 
 \AF_S^{n+1}\ar@{->}[ldd]&\hspace{3cm}:=\spec A[y_1,...,y_{n+1}] \\
   &&& \\
   & S&& 
}$   
\end{center}
such that the $S$-scheme $\widetilde{W}^t\cap U$
is defined by the ideal $\phi^{\star}\mathcal{I}$, where $\mathcal{I}=(y_{1}^{a_{1}}\cdots y_{n+1}^{a_{n+1}})$,  $a_i\geq 0$.  
If $p\in \widetilde{W}^{s}$, we assume that $a_{n+1}=1$, and that $\widetilde{W}^s\cap U$ is defined by the ideal
$\phi^{\star}\mathcal{I}'$, 
where $\mathcal{I}'=(y_{n+1})$.\\

\subsection{${\bf \FN{\f}}$-Deformations} \label{se:new-def} Now we will show an important class 
of examples which satisfy  the hypothesis imposed above.\\

By abuse of notation we will note for $0$ the point of
$\AF^{n+1}_{\KK}:=\spec \KK[x_1,..,x_{n+1}]$  corresponding to the maximal ideal $(x_1,...,x_{n+1})$.  
Sometimes we will call this point the origin of $\AF^{n+1}_{\KK}$.\\

Let $V$ be a hypersurface of $\AF_{\KK}^{n+1}$ defined by an irreducible polynomial $\f=\sum c_ex^e \in \KK[x_1,...,x_{n+1}]$, where 
$x^e:=x_1^{e_1}\cdots x_{n+1}^{e_{n+1}}$ and $c_e\in \KK$, and $\mathcal{E}(\f):=\{e\in \ZZ_{\geq 0}^{n+1}\mid c_e\neq 0\}$.  
The Newton polyhedron  $\PN{\f}$ is the convex hull of the set $\{e\in \RR^{n+1}_{\geq 0}\mid e\in \mathcal{E}(\f)\}$, 
and the Newton boundary  $\FN{\f}$ of $\f$ is the union of compact faces of $\PN{\f}$. The Newton fan $\EN{\f}$
associated with $\f$ is the subdivision of the standard cone $\Delta:=\RR_{\geq 0}^{n+1}$, whose corresponding toric variety is the normalized blowing up of the ideal $\mathcal{I}(\f):=(\{x^e\mid e\in \FN{\f}\cap \ZZ^{N+1}\})$.\\

The polynomial $\f$ is non-degenerate with respect to the Newton boundary if for each compact face
$\gamma$ of $\PN{\f}$,
the polynomial $\f_{\gamma}:=\sum_{e\in \gamma}c_ex^e$  is non singular in the torus $T:=(\KK^{\star})^{n+1}$.\\    

We define the support function associated with $\PN{\f}$ as follows:

\begin{center}
 $\hf{\f}{p}=\inf\{\langle r,p\rangle \mid r\in \PN{\f}\}$, $p\in \Delta$.
\end{center}
where  $\langle\;,\;\rangle:\RR^{n+1}\times \RR^{n+1}\rightarrow \RR$  is the scalar product.\\

It is known that the Newton fan $\EN{\f}$  satisfies the following property (see \cite{Var76} or 
\cite{Mer80}):\\

$(\star)$ Let $J\subset \{1,...,n+1\}$ and  $\sigma_J:=\{(p_1,...,p_{n+1})\in  \Delta \mid p_i=0\; {\rm iff} \; i\neq J\}$.
If there exists $p\in \sigma_J$ such that $\hf{\f}{p}=0$,  then the adherence of $\sigma_J$ is a cone of  $\EN{\f}$.\\

A regular subdivision $\Sigma$ of  $\EN{\f}$ is called {\it admissible} if it satisfies the property ($\star$), in other words, 
if there exists  $p\in \sigma_J$ such that $\hf{\f}{p}=0$, then $\overline{\sigma_J}\in \Sigma$.  
The existence of the regular admissible subdivisions, under the assumption that $\f$ is non degenerate with respect to the Newton Boundary, 
was proved in \cite{Oka87}.\\

For the rest of this section we will assume that $\f$ is non degenerate with respect to the Newton Boundary, and that  $\REN{\f}$
is a regular admissible subdivision.  We suppose that the origin $0$ of  $\AF_{\KK}^{n+1}$ 
is the unique singular point of $V$, and that $V$ does not contain $T$-orbits of a strictly positive dimension.  
Then the toric morphism $\pi: X(\REN{\f})\rightarrow \AF_{\KK}^{n+1}$ is an embedded resolution of the singularity of the hypersurface $V$, 
which is to say $V^t_{red}$ is a simple normal crossing divisor.  
Observe that the irreducible components of $V^t_{red}$ are smooth varieties, in particular $V^s$ is a smooth variety.  For more details, see \cite{Var76}.\\

 It is worth noting that the property mentioned above only depends on the Newton polyhedron $\PN{\f}$,
 and on the fact that the polynomial $\f$ is non degenerate with respect to $\FN{\f}$, which is to say if $V'$ is a hypersurface, 
 with a unique singular point in $0$, given by the polynomial $\g$, non degenerate with respect to $\FN{\g}$ such that $\PN{\f}=\PN{\g}$,
 then $\pi: X(\REN{\f})\rightarrow \AF_{\KK}^{n+1}$  is an embedded resolution of the singularity $V'$.\\

Let $A$ be the localization of the ring of polynomials $\KK[s_1,...,s_l]$ at the maximal ideal  $(s_1,...,s_l)$. 
Consider the following polynomial.

 \begin{center}
  $\F:=\f(x_1,..,x_{n+1})+\sum_{j=1}^{l}s_j\g_j(x_1,...,x_{n+1})$
 \end{center}
 where  $\PN{\g_j}\subseteq \PN{\f}$ for all $j\in\{1,..,l\}$.  
 Let $W$ be the hypersurface of   $\AF_{S}^{n+1}$, $S:=\spec A$, defined by the polynomial $\F$.  Clearly $W$ is an embedded deformation of $V$ over $S$.  
	Let  $\Id:S\rightarrow S$ be the identity morphism  and let us consider the following  proper birational morphism:

 \begin{center}
 $\xymatrix @!0 @R=1.5pc @C=2pc{  
 \widetilde{\AF}_{S}^{n+1}:=X(\Sigma_{\f})\times S \ar@{->}[rrrrrr]^{\varphi:=\pi\times \Id} 
 &&&&&& \AF_{S}^{n+1}= \AF_{\KK}^{n+1}\times S
 \\}$   
\end{center}
 This morphism is a Simultaneous  embedded Resolution of $W$.

 \subsection{Deformation of  Spaces of $m$-jets}
 \label{sec:def-hyp} Let us remember that $\KK$ is an algebraically closed field of characteristic zero, 
 $A$ is  a local $\KK$-algebra of finite type, 
$V$ is a hypersurface of $\AF_{\KK}^{n+1}$, and  $\varrho:W\rightarrow S$ is an embedded deformation 
of $V$ over $S$. 
We consider a proper birational morphism $\varphi: \widetilde{\AF}_{S}^{n+1}\rightarrow \AF_{S}^{n+1}$  
such that $\widetilde{\AF}_{S}^{n+1}$ is formally smooth over $S$, and we note for   $\widetilde{W}^{s}$ and
$\widetilde{W}^{t}$  the strict and total 
transform of $W$ in $\widetilde{\AF}_{S}^{n+1}$ respectively. 
We note for $\varrho_m:\mJet{W}{S}{m}\rightarrow S$  the morphism induced by $\varrho$ (see Section \ref{sec:Hip-Gen}).\\

The following theorems are the main results of this article, which are the direct consequences
of Propositions \ref{pr:flat-mjet} and  \ref{pr:BW-flat}, which will be proved in this section.

 \begin{theorem}
 \label{th:flat-mjet}
  Let us suppose that $\varphi$ is a simultaneous embedded resolution of $W$, and that all embedded components of $\mJet{W}{S}{m}$ 
  dominate $S$.  Therefore for each $m\geq 0$, the morphism $\varrho_m:\mJet{W}{S}{m}\rightarrow S$ 
  is a deformation of the Space of $m$-jets $V_m$.   
  
 \end{theorem}
\begin{remark}
In the hypothesis of the theorem the condition that every embedded component 
of $\mJet{W}{S}{m}$ dominates S is satisfied,  for example  if $V_m$ does not have embedded components, or 
if  $\varphi_m:\mJet{\widetilde{W}^{t}}{S}{m}\rightarrow \mJet{W}{S}{m}$  
is schematically dominant. 
 \end{remark} 
\begin{theorem}
 \label{th:flat-mjet2}Let be $S=(\AF_{\KK},0)$ and let us suppose that $\varphi$ is a simultaneous embedded resolution of $W$.  
 Therefore for each $m\geq 0$ the morphism $\varrho_m:\red{(\mJet{W}{S}{m})}\rightarrow S$ 
is a deformation of $\red{(\mJet{W}{S}{m})}\times_{\AF_{\KK}}\{0\}$.  

\end{theorem}

\begin{remark}
 Observe that  $\red{(\red{(\mJet{W}{S}{m})}\times_{\AF_{\KK}}\{0\})}\cong \red{(V_m)}$.
\end{remark}

Initially we do not suppose that  $\varphi$ is a simultaneous embedded resolution of $W$

\begin{proposition}
\label{pr:morf-dom} 
The morphism $\varphi_m:\red{(\mJet{\widetilde{W}^{t}}{S}{m})}\rightarrow \red{(\mJet{W}{S}{m})}$ is surjective for  all  
$m\geq 0$.
\end{proposition}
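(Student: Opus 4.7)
The plan is to reduce the dominance statement, via base change, to a surjectivity assertion on the ambient jet spaces, and then handle the ambient case using properness of $\varphi$ together with formal smoothness of $\widetilde{\AF}_{S}^{n+1}$.

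First, since $\widetilde{W}^{t}$ is by construction the scheme-theoretic preimage $\varphi^{-1}(W)$, we have the fibred-product identification $\widetilde{W}^{t}\cong W\times_{\AF_{S}^{n+1}}\widetilde{\AF}_{S}^{n+1}$. Applying the universal property of $m$-jets (Theorem \ref{th:prop-fonct-mjet}, together with Theorem \ref{th:prop-fonct-arc} in the case $m=\infty$) to this fibred product yields
\[
\mJet{\widetilde{W}^{t}}{S}{m}\;\cong\;\mJet{W}{S}{m}\,\times_{\mJet{\AF_{S}^{n+1}}{S}{m}}\,\mJet{\widetilde{\AF}_{S}^{n+1}}{S}{m}.
\]
Consequently $\varphi_m$ is obtained by base change of the ambient morphism $\Phi_m:\mJet{\widetilde{\AF}_{S}^{n+1}}{S}{m}\to \mJet{\AF_{S}^{n+1}}{S}{m}$ along the closed immersion $\mJet{W}{S}{m}\hookrightarrow \mJet{\AF_{S}^{n+1}}{S}{m}$. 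Since set-theoretic surjectivity of morphisms of schemes is preserved by base change, it suffices to show that $\Phi_m$ is surjective.

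Second, I would establish surjectivity of $\Phi_m$ separately for arcs and for finite jets. For $m=\infty$: given an arc $\alpha:\spec K[[t]]\to \AF_{S}^{n+1}$, its restriction to the generic fibre yields $\spec K((t))\to \AF_{S}^{n+1}$. Because $\varphi$ is proper, birational, and hence surjective, after passing to a finite field extension this generic morphism admits a lift to $\widetilde{\AF}_{S}^{n+1}$; the valuative criterion of properness applied to $\varphi$ then extends the lift across the closed point of the associated DVR, producing an arc on $\widetilde{\AF}_{S}^{n+1}$ whose image is $\alpha$ up to the residue-field extension—which is enough to place the scheme-theoretic point of $\alpha$ in the image of $\Phi_\infty$. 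For $0\le m<\infty$, the formal smoothness of $\AF_{S}^{n+1}$ over $S$ ensures that the truncation $\mJet{\AF_{S}^{n+1}}{S}{\infty}\to \mJet{\AF_{S}^{n+1}}{S}{m}$ is surjective; one may therefore extend any $m$-jet of $\AF_{S}^{n+1}$ to an arc, lift via the previous case, and truncate back to obtain an $m$-jet preimage under $\Phi_m$.

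Combining the two steps, $\varphi_m$ is surjective, in particular dominant, and dominance is inherited by the induced morphism $\red{(\mJet{\widetilde{W}^{t}}{S}{m})}\to \red{(\mJet{W}{S}{m})}$. I expect the main subtlety to lie in the valuative lifting step: after lifting the generic fibre of an arc over a possibly ramified extension, one must verify that the resulting DVR-point really witnesses the original scheme-theoretic point of $\alpha$ in the image, which comes down to separatedness of $\AF_{S}^{n+1}\to S$ and a careful bookkeeping of the residue-field extensions that arise from non-trivial fibres of $\varphi$ over the exceptional locus.
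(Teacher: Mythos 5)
Your first step—identifying $\widetilde{W}^{t}$ with the scheme-theoretic fibre product $W\times_{\AF_{S}^{n+1}}\widetilde{\AF}_{S}^{n+1}$ and using the functorial description of jets to get
$\mJet{\widetilde{W}^{t}}{S}{m}\cong \mJet{W}{S}{m}\times_{\mJet{\AF_{S}^{n+1}}{S}{m}}\mJet{\widetilde{\AF}_{S}^{n+1}}{S}{m}$—is correct and is genuinely different from (and cleaner than) what the paper does; the paper instead verifies by hand that a lifted arc has contact order $\geq m+1$ with the total transform. But your reduction commits you to proving that the ambient morphism $\Phi_m$ is \emph{surjective} over all of $\mJet{W}{S}{m}$, which is strictly stronger than the dominance actually claimed, and this is where the argument breaks.

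The gap is the one you yourself flag, and it is not mere bookkeeping. If the arc $\alpha:\spec K[[t]]\rightarrow \AF_{S}^{n+1}$ has its generic point inside the closed set $Z$ over which $\varphi$ fails to be an isomorphism, your lift of $\spec K((t))$ is only produced after passing to a finite extension $L/K((t))$, and such an extension is in general ramified: the corresponding extension of valuation rings is $K[[t]]\rightarrow R\cong k_R[[u]]$ with $t\mapsto (\mathrm{unit})\cdot u^{e}$, $e>1$. The arc $\spec R\rightarrow \widetilde{\AF}_{S}^{n+1}$ obtained from the valuative criterion then lies over the \emph{reparametrization} of $\alpha$ by $t\mapsto (\mathrm{unit})\cdot u^{e}$, which is a different point of the arc space (compare $t\mapsto(t,0)$ with $t\mapsto(t^{2},0)$ in $\AF^{2}$). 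So it does not witness the original point $\alpha$ in the image of $\Phi_\infty$, and I see no way to force the extension to be unramified for an arbitrary proper birational $\varphi$. The paper avoids this entirely because it only needs dominance: it takes $\alpha$ to be an \emph{associated point} of $\red{(\mJet{W}{S}{m})}$, uses formal smoothness to extend the corresponding $m$-jet to an ambient arc $\beta$ with $\ord_t\beta^{\star}\mathcal{I}(W)\geq m+1$ \emph{chosen so that $\beta\notin\Arc{Z}{S}$} (possible since $Z$ is a proper closed subset, so the higher-order coefficients of $\beta$ may be perturbed freely), and then lifts $\beta$ by the valuative criterion applied to $K[[t]]$ itself: the generic point of $\beta$ lands in $U=\AF_{S}^{n+1}\setminus Z$, where $\varphi$ is an isomorphism, so no field extension and no ramification occur. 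Your proof becomes correct if you graft this onto your framework: apply your fibre-product reduction only to the associated points of $\red{(\mJet{W}{S}{m})}$, each represented by an arc avoiding $Z$, rather than asserting surjectivity of $\Phi_m$ everywhere.
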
 

The following corollary follows directly from Proposition \ref{pr:morf-dom}.\\

We define the following application: 
\begin{center}
 $\sharp:\KK\!-\!\mathcal{S}ch\rightarrow \ZZ\cup{\infty}: V\mapsto \mbox{Number of irreducible components of $\red{V}$}$
\end{center}

\begin{corollary}
 For each $m \geq  0$, we have $\sharp(V_m)\leq \sharp{({V^t}_m})$. 
\end{corollary}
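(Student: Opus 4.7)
The plan is to specialize Proposition \ref{pr:morf-dom} to the situation where the parameter space is a point, i.e. take $A=\KK$ and $S=\spec \KK$. The Cartesian square defining an embedded deformation collapses in this case, so $W=V$ and $\widetilde{W}^{t}=V^{t}$, where $V^{t}$ is the total transform of $V$ under a proper birational morphism $\varphi:\widetilde{\AF}_{\KK}^{n+1}\to \AF_{\KK}^{n+1}$ with smooth source. Proposition \ref{pr:morf-dom} then provides, for every $0\leq m\leq \infty$, a dominant morphism of reduced $\KK$-schemes
\[
\varphi_m:\red{((V^{t})_m)}\to \red{(V_m)}.
\]

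It remains to check the purely formal implication that a dominant morphism of reduced schemes cannot increase the number of irreducible components. I would record this as a short lemma: if $f:X\to Y$ is dominant with $X$, $Y$ reduced, then $\sharp(Y)\leq \sharp(X)$. The non-trivial case is $\sharp(X)<\infty$; writing the irreducible decomposition $X=X_1\cup\dots\cup X_n$, dominance gives
\[
Y=\overline{f(X)}=\bigcup_{i=1}^{n}\overline{f(X_i)},
\]
and each $\overline{f(X_i)}$ is irreducible and closed in $Y$. Every irreducible component $Y_j$ of $Y$, being irreducible and contained in this finite union, must lie inside some $\overline{f(X_i)}$; maximality of $Y_j$ upgrades the inclusion to the equality $Y_j=\overline{f(X_i)}$. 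Hence the assignment $Y_j\mapsto i$ defines a surjection from a subset of $\{1,\dots,n\}$ onto the components of $Y$, yielding $\sharp(Y)\leq n=\sharp(X)$. The remaining case $\sharp(X)=\infty$ is vacuous, which takes care of the potential non-Noetherianity at $m=\infty$ where $(V^{t})_\infty$ may a priori have infinitely many components.

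Applying the lemma to $f=\varphi_m$ gives $\sharp(V_m)\leq \sharp((V^{t})_m)$ for every $m$, as required. There is essentially no obstacle: the substantive content is entirely packed into Proposition \ref{pr:morf-dom}, and the only care needed is the mild observation about how dominance interacts with irreducible decomposition, together with the harmless handling of the $m=\infty$ case.
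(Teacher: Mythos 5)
Your proof is correct and follows exactly the route the paper intends: the paper simply states that the corollary ``follows directly from Proposition \ref{pr:morf-dom},'' and your argument is the honest spelling-out of that claim, namely applying the proposition (in the case where the base collapses to a point, so that the dominant morphism is $\red{((V^t)_m)}\to\red{(V_m)}$) together with the standard fact that a dominant morphism of reduced schemes with finitely many irreducible components in the source cannot increase the number of components. The handling of the $m=\infty$ case as vacuous when the source has infinitely many components is a reasonable and correct precaution.
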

\begin{remark} 
If we  explicitly give a hypersurface $V$ and an embedded resolution of $V$, we can use the 
results of \cite{GoSm06} to bound the value of $\sharp{(V_m)}$.  
In general this corollary does not give optimal bounds, for example if the singularity of $V$ is rational, $\sharp{(V_m)}=1$, while 
$ \sharp{({V^t}_m})$ can be large. 

 \end{remark}

\begin{proof}[Proof of Proposition \ref{pr:morf-dom}]

 Let us consider the following commutative diagram of $S$-morphisms:

 \begin{center}

$\xymatrix @!0 @R=1.5pc @C=2pc{  
 \widetilde{W}^t \ar@{^{(}->}[rr] \ar@{->}[dd] && \widetilde{\AF}^{n+1}_{S} \ar@{->}[dd] \\
   & \commutatif& \\
 W \ar@{^{(}->}[rr]^{i} && \AF^{n+1}_{S}
}$   
\end{center}

This diagram naturally induces the following commutative diagram:

 \begin{center}

$\xymatrix @!0 @R=1.5pc @C=2pc{  
\red{(\mJet{\widetilde{W}^t}{S}{m})} \ar@{^{(}->}[rrrr] \ar@{->}[dd] &&&& \mJet{\widetilde{\AF}^{n+1}_{S}}{S}{m} 
\ar@{->}[dd]^{\varphi_m} \\
   && \commutatif & & \\
 \red{(\mJet{W}{S}{m})} \ar@{^{(}->}[rrrr] &&&& \mJet{\AF^{n+1}_{S}}{S}{m}}$   
\end{center}

 Let $\alpha$ be an  point of $\red{(\mJet{W}{S}{m})}$, and let  $\kappa(\alpha)$
be the residual field of the point $\alpha$.  The functorial property of the Spaces of  $m$-jets 
(see Theorem \ref{th:prop-fonct-mjet})  tells us that an $m$-jet corresponds to the point $\alpha$. 
By abuse of notation we will note this $m$-jet for $\alpha$.

\begin{center}
 $\alpha: \spec \kappa(\alpha)[t]/(t^{m+1})\rightarrow W$.
\end{center}

By abuse of notation, we note for $\alpha$ the $m$-jet  $i\circ \alpha:\spec \kappa(\alpha)[t]/(t^{m+1})\rightarrow \AF^{n+1}_{S}$.
As $\AF_{S}^{n+1}$ is formally smooth over $S$, the $S$-morphism

\begin{center}
 $\pinf{m}{}: \Arc{\AF^{n+1}_{S}}{S}\rightarrow \mJet{\AF^{n+1}_{S}}{S}{m}$
\end{center}
 is surjective. Let $\mathcal{I}(W)$ be the ideal of $W$.  As $W$ is a hypersurface, 
 there exists a polynomial $\F\in A[x_1,...,x_{n+1}]$ such that $\mathcal{I}(W)=(F)$.  The morphism $\pinf{m}{}$ is surjective, 
 then there exists an   arc $\beta:\spec \kappa(\alpha)[[t]]\rightarrow \AF_{S}^{n+1}$, such that $\pinf{m}{}(\beta)=\alpha$  and the ideal   
  $\beta^{\star} \mathcal{I}(W)$ satisfies that

\begin{center}
$\ord_t\beta^{\star} \mathcal{I}(W):=\min \{\ord_t r\mid r\in \beta^{\star} \mathcal{I}(W)\}  \geq m+1$.
\end{center}
observe that $\ord_t\beta^{\star} \mathcal{I}(W)=\ord_t \F(\beta^{\star}(t))$, where $\beta^{\star}(t)$ is the comorphism of $\beta$.\\ 

Let $Z\subset \AF_{S}^{n+1}$ be an $S$-scheme such that  $\varphi:\varphi^{-1}(U)\rightarrow U$ is an isomorphism, where $U:=\AF_{S}^{n+1}-Z$. 
Without loss of generality we can suppose that $\beta\not\in \Arc{Z}{S}$.  
As the morphism $\varphi:\widetilde{\AF}_{S}^{n+1}\rightarrow \AF_{S}^{n+1}$ is proper, there exists $\widetilde{\beta}\in \Arc{\widetilde{\AF}_{S}^{n+1}}{S}$ 
such that $\varphi\circ \widetilde{\beta}=\beta$.  Let $U'\subset \widetilde{\AF}^{n+1}_S $ be an  affine  open such that $\widetilde{\beta}(0)\in U'$,
then: 

\begin{center}
 $\ord \widetilde{\beta}^{\star}\mathcal{I}(U'\cap \widetilde{W}^t)\geq m+1$
\end{center}

Let us consider the following commutative diagram:

\begin{center}

$\xymatrix @!0 @R=1.5pc @C=2pc{  
\Arc{\widetilde{\AF}^{n+1}_S}{S} \ar@{^{(}->}[rrrr]^{\pinf{m}{\sim}} \ar@{->}[dd] &&&& \mJet{\widetilde{\AF}^{n+1}_{S}}{S}{m} \ar@{->}[dd] \\
   && \commutatif && \\
 \Arc{\AF^{n+1}_S}{S} \ar@{^{(}->}[rrrr]^{\pinf{m}{}} &&&& \mJet{\AF^{n+1}_{S}}{S}{m}
}$   
\end{center}

We define  $\widetilde{\alpha}:=\pinf{m}{\sim}(\widetilde{\beta})$.   By construction we obtain that $\varphi_{m}(\widetilde{\alpha})=\alpha$.\\  
As $\ord{\widetilde{\beta}^{\star}\mathcal{I}(U'\cap \widetilde{W}^t)}\geq m+1$, 
we obtain that  $\widetilde{\alpha}\in \red{(\mJet{\widetilde{W}^t}{S}{m})}$.   This ends the proof.   
\end{proof}

The following proposition is one of the key points of the principal resolution of this article.

\begin{proposition}
 \label{pr:flat-mjet} 
   Let us suppose that every embedded component of $\mJet{W}{S}{m}$ dominates $S$.  Let us fix $m\geq 0$. 
   If the morphism $\mJet{\widetilde{W}^t}{S}{m}\rightarrow S$  is flat then the morphism $\varrho_m:\mJet{W}{S}{m}\rightarrow S$ 
   is a deformation of the Space of $m$-jets $V_m$.

 \end{proposition}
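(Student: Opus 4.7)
The proposition requires two verifications: that $\varrho_m$ is $S$-flat, and that its scheme-theoretic fibre over the closed point $0 \in S$ is canonically $\red{(V_m)}$. My plan is to use Proposition \ref{pr:mjets-chang-base} to pin down the special fibre, and to combine the dominant morphism supplied by Proposition \ref{pr:morf-dom} with the flatness hypothesis on $\mJet{\widetilde{W}^{t}}{S}{m}$ to get $S$-flatness.

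For the special-fibre identification, I would apply Proposition \ref{pr:mjets-chang-base} to the Cartesian square exhibiting $W$ as an embedded deformation of $V$ over $S$, obtaining $\mJet{W}{S}{m}\times_S\{0\}\cong V_m$. Pulling back the closed immersion $\red{(\mJet{W}{S}{m})}\hookrightarrow \mJet{W}{S}{m}$ to the fibre over $0$ produces a closed subscheme of $V_m$ containing $\red{(V_m)}$: indeed, $\red{(V_m)}$ is itself reduced and lies over $0$, so it factors through the maximal reduced subscheme. Once $\varrho_m$ is known to be flat and the total space $\red{(\mJet{W}{S}{m})}$ reduced, the special fibre will be generically reduced (its generic points arise from components dominating $S$), and since it embeds into $V_m$, it will coincide with $\red{(V_m)}$.

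For the flatness of $\varrho_m$, the key tool is the dominant morphism $\varphi_m: \red{(\mJet{\widetilde{W}^{t}}{S}{m})}\to \red{(\mJet{W}{S}{m})}$ of Proposition \ref{pr:morf-dom}. Because $\mJet{\widetilde{W}^{t}}{S}{m}$ is $S$-flat by hypothesis, its associated points lie above associated points of $S$; in particular, every irreducible component of $\red{(\mJet{\widetilde{W}^{t}}{S}{m})}$ dominates an irreducible component of $S$. The dominance of $\varphi_m$ transfers this property: every irreducible component of $\red{(\mJet{W}{S}{m})}$ is the closure of the image of some component of the source, hence dominates $S$. Combined with the upper bound on the fibre dimensions of $\varrho_m$ obtained from applying $\varphi_m$ fibrewise, together with the equidimensionality of $\mJet{\widetilde{W}^{t}}{S}{m}$ over $S$, this should yield $S$-flatness via the local criterion of flatness.

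The main obstacle lies in this last implication. Passing from ``every irreducible component of the reduced scheme $\red{(\mJet{W}{S}{m})}$ dominates $S$'' plus ``bounded fibre dimensions'' to genuine $A$-flatness is delicate over a general local $\KK$-algebra $A$ of finite type. In the one-dimensional regular case ($A=\KK[[s]]$, as in Theorem \ref{th:pham-bri}) the step is immediate, since flatness reduces to torsion-freeness and follows from dominance of components combined with reducedness. For higher-dimensional $A$, one must verify fibrewise constancy of dimension carefully and apply a sharper version of the local flatness criterion, most likely by exploiting the explicit polynomial description of $W$ together with the relative normal-crossing structure on $\widetilde{W}^{t}$ that underlies the flatness hypothesis.
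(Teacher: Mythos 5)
You have correctly isolated the two ingredients the paper also relies on: the identification of the special fibre via Proposition \ref{pr:mjets-chang-base}, and the key lemma that every irreducible component of $\red{(\mJet{W}{S}{m})}$ dominates $S$, deduced from the dominance of $\varphi_m$ (Proposition \ref{pr:morf-dom}) together with the flatness of $\mJet{\widetilde{W}^t}{S}{m}\rightarrow S$. Your treatment of the one-dimensional base is also the paper's: for a reduced scheme over $(\AF_{\KK},0)$, flatness is equivalent to all components dominating the base.

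The genuine gap is precisely the step you flag yourself: passing from ``all components dominate $S$'' to flatness when $\dim S>1$. The route you sketch --- bounding fibre dimensions, equidimensionality over $S$, then ``a sharper local criterion'' --- does not close it: domination of all components plus equidimensional fibres over a regular base gives flatness only under a Cohen--Macaulay hypothesis on the source (``miracle flatness''), and jet schemes of singular hypersurfaces are not Cohen--Macaulay in general; moreover the argument has no control on the fibre dimensions of $\varrho_m$. The paper's proof avoids fibre dimensions entirely: writing $(M(S),p)$ for the germ of $\red{(\mJet{W}{S}{m})}$ at a point $p$ of $\red{(V_m)}$, it reduces to $S=(\AF_{\KK}^{l},0)$ and inducts on $l$ using the slicing criterion of flatness (Corollary 6.9, p.~170 of \cite{Eis95}). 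Concretely, since $\Gamma(\mathcal{O}_{M(S),p})$ is reduced and each of its minimal primes contracts to the generic point of $S$ (your domination lemma), the coordinate $s_1$ is a nonzerodivisor on $\Gamma(\mathcal{O}_{M(S),p})$: a nonzero $h$ with $s_1h=0$ would force the nonempty open set where $h\neq 0$, hence a whole component, into $\varrho_m^{-1}(V(s_1))$. One then base-changes to $S':=V(s_1)\cong(\AF_{\KK}^{l-1},0)$, identifies $(M(S),p)\times_S S'$ with $(M(S'),p)$ via Proposition \ref{pr:mjets-chang-base}, and applies the induction hypothesis to get flatness of the slice over $S'$, whence flatness over $S$; a final base change handles an arbitrary local $\KK$-scheme of finite type. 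So the missing idea is an induction on $\dim S$ driven by a nonzerodivisor supplied by the domination lemma, not a dimension count on fibres.
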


 \begin{proof} For the proposition \ref{pr:mjets-chang-base} we have that $V_m\cong \mJet{W}{S}{m}\times_{\AF_{\KK}}\{0\}$. 
  So it suffices to prove that the morphism   $\mJet{W}{S}{m}\rightarrow S$ is flat.\\

The flatness is a local property (see, for example, Proposition $2.13$ from page $10$ of \cite{Liu02},
or Proposition $9.1A$ from page $253$ of \cite{Har77}), then it is enough to consider $p \in V_{m}$ 
arbitrary, and to prove that the germ of a neighborhood of $p$, $ (\mJet{W}{S}{m},p)$, is flat over S.\\ 
 
 Let us suppose that $p\in \mJet{W}{S}{m}$.  By virtue of Proposition \ref{pr:morf-dom}
 we can choose $q\in \varphi_m(p)^{-1}$ such that the morphisms of germs induced by $\varphi_m$ is dominant.

   \begin{center}

$\xymatrix @!0 @R=1.5pc @C=2pc{  
 &(\red{(\mJet{\widetilde{W}^t}{S}{m})},q) \ar@{->}[rrrr]^{\varphi_m} \ar@{->}[rrdd]&&&&
 (\red{(\mJet{W}{S}{m})},p)\ar@{->}[lldd] \\
(\star)\mbox{\hspace{2cm}}  & &&&&\\
   &&& S && 
}$
\end{center}

  The following lemma will be very useful in this proof.

\begin{lemma}
\label{le:mjet-domi}
  Every associated component of $(\mJet{W}{S}{m},p)$  dominates   $S$.  
 \end{lemma}
\begin{proof}
For the  hypotheses of the proposition  we are only interested in the associated components that are not embedded components.  
Then it is enough to prove that all components of $(\red{(\mJet{W}{S}{m})},p)$ dominate $S$. 

The morphism$(\mJet{\widetilde{W}^t}{S}{m}, q)\rightarrow S$
is flat, which implies that each irreducible component of $(\red{(\mJet{\widetilde{W}^t}{S}{m})}, q)$ dominates $S$ (see Lemma $3.7$ from page $136$ of \cite{Liu02}).
 Using the fact that the morphism  $\varphi_m:\red{(\mJet{\widetilde{W}^t}{S}{m})} \rightarrow \red{(\mJet{\widetilde{W}}{S}{m})}$ 
    is dominant (Proposition \ref{pr:morf-dom}), and the commutative diagram  $(\star)$, we obtain that each  irreducible component 
  of $(\red{(\mJet{W}{S}{m})},p)$ dominates $S$.    
\end{proof}

We will begin with the case $S:=(\AF_{\KK}, 0)$.  The condition of flatness over 
$(\AF_{\KK}, 0)$ is equivalent to  each associated components of 
 $(\mJet{W}{S}{m},p)$ dominating $(\AF_{\KK}, 0)$ (see Proposition $9.7$ from page $257$ of \cite{Har77}). 
 Which is obtained directly from the previous lemma.\\

Now we will suppose that the theorem is true for all schemes $(\AF_{\KK}^{l_0}, 0)$ with $l_0\leq l-1$.\\

Let $S:=(\AF_{\KK}^{l}, 0)$.  To demonstrate this case we will use the Corollary $6.9$ on page $170$ of \cite{Eis95}.
This is to say that we will prove that exists $s\in \Gamma(\mathcal{O}_S)$    such that $s$ 
is not a divisor of the zero in $\Gamma(\mathcal{O})$ and that
$\Gamma(\mathcal{O})/s\Gamma(\mathcal{O})$ 
is flat over  $\Gamma(\mathcal{O}_S)/(s)$, 
where $\mathcal{O}:=\mathcal{O}_{(\mJet{W}{S}{m},p)}$.\\

Let us consider the injective morphism $\KK[s_1,...,s_{l}]_m\rightarrow \Gamma(\mathcal{O})$,
where $m:=(s_1,...,s_{l})$.  Let us suppose that there exists $h\in \Gamma(\mathcal{O})- \{0\}$
such that $s_{1}h=0$. Then the associated ideal $\rm{ass}(s_1)\neq 0$  contradicting Lemma \ref{le:mjet-domi}.

 Then  $s_1$  is not a divisor of the zero in  
$\Gamma(\mathcal{O})$.\\

Now let us consider the following commutative diagram:

\begin{center}

$\xymatrix @!0 @R=1.5pc @C=2pc{  
 (\mJet{W}{S}{m},p)\times_{S}S'\ar@{^{(}->}[rrrrrr] \ar@{->}[dd] &&&&&& (\mJet{W}{S}{m},p)  \ar@{->}[dd]^{\varrho_m} \\
  & && & &&\\
    S':=\spec\KK[s_2,...,s_{l}]_{m'}\ar@{^{(}->}[rrrrrr] &&&&&& S:=\spec \KK[s_1,...,s_{l}]_m
}$   
\end{center}
where $m':=(s_2,...,s_{l})\subset \KK[s_2,...,s_{l}]$. 

By virtue of  Proposition \ref{pr:mjets-chang-base}, 
we obtain that  $(\mJet{W}{S}{m},p)\times_{S}S'\cong (\mJet{W}{S'}{m},p)$.  Now using the 
induction hypothesis we obtain that $(\mJet{W}{S}{m},p)\times_{S}S'\rightarrow S'$ is a flat morphism. 
Thereby we obtain that the morphism $\mJet{W}{S}{m}\rightarrow S$ is flat.\\ 

Let us consider $S$ a local $\KK$ scheme of the finite type.  
Then there exists an integer $l$ and a morphism $S\rightarrow (\AF^{l},0)$.  
Using base extension, 
the Proposition $9.1A$ of \cite{Har77}, and the Proposition \ref{pr:mjets-chang-base},
we obtain that the morphism $\mJet{W}{S}{m}\rightarrow S$ is flat.    \end{proof} 

From this point on we will assume that the morphism $\varphi$ is a Simultaneous Embedded Resolution 
of $W$. 
 
 \begin{proposition}
 \label{pr:BW-flat}
  The morphism $\mJet{\widetilde{W}^t}{S}{m}\rightarrow S$ is flat for all $m\geq 0$.
 \end{proposition}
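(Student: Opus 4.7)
The plan is to check flatness locally on $\widetilde{W}^t$ and reduce the problem to computing the jet space of the monomial hypersurface $y_1^{a_1}\cdots y_{n+1}^{a_{n+1}}=0$ in $\AF_S^{n+1}$, whose defining equation does not involve any parameter from $A$. Once there, Proposition~\ref{pr:mjets-chang-base} immediately identifies the jet space with a product and flatness becomes manifest.

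Since flatness is a local property on the source, I fix $p\in\widetilde{W}^t$ and work in the Zariski open neighborhood $U\subset\widetilde{\AF}_S^{n+1}$ supplied by the definition of Simultaneous Embedded Resolution, equipped with its \'etale $S$-morphism $\phi:U\to\AF_S^{n+1}=\spec A[y_1,\ldots,y_{n+1}]$, such that $\widetilde{W}^t\cap U=\phi^{-1}(Y)$, where $Y\subset\AF_S^{n+1}$ is cut out by $\mathcal{I}=(y_1^{a_1}\cdots y_{n+1}^{a_{n+1}})$. Base change of \'etale morphisms along $Y\hookrightarrow\AF_S^{n+1}$ shows that the induced morphism $\widetilde{W}^t\cap U\to Y$ is itself \'etale.

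The second step is to invoke the \emph{\'etale base-change property} for relative jet spaces: for any \'etale $S$-morphism $\psi:X'\to X$ one has
\begin{center}
$\mJet{X'}{S}{m}\cong X'\times_X\mJet{X}{S}{m}.$
\end{center}
This follows from the functorial description of jet spaces (Theorem~\ref{th:prop-fonct-mjet}) combined with the unique infinitesimal lifting property of \'etale morphisms: a morphism $Z\times_\KK\spec\KK[t]/(t^{m+1})\to X'$ is determined uniquely by its composition with $\psi$ and by any lift $Z\to X'$ at the closed fibre. Applied to $\widetilde{W}^t\cap U\to Y$ this yields
\begin{center}
$\mJet{\widetilde{W}^t\cap U}{S}{m}\cong(\widetilde{W}^t\cap U)\times_Y\mJet{Y}{S}{m},$
\end{center}
so the canonical projection onto $\mJet{Y}{S}{m}$ is \'etale and in particular flat; it therefore suffices to prove that $\mJet{Y}{S}{m}\to S$ is flat.

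The decisive observation is that the equation cutting out $Y$ has constant coefficients, so $Y\cong Y_0\times_\KK S$ where $Y_0\subset\AF_\KK^{n+1}$ is the corresponding monomial hypersurface over the base field. Applying Proposition~\ref{pr:mjets-chang-base} to the Cartesian square obtained from $S\to\spec\KK$ gives $\mJet{Y}{S}{m}\cong(Y_0)_m\times_\KK S$, whose structural morphism to $S$ is the second-factor projection and hence manifestly flat. Composing the two flat morphisms yields flatness of $\mJet{\widetilde{W}^t\cap U}{S}{m}\to S$ on a Zariski cover of $\widetilde{W}^t$, which concludes the proof. The one step that requires care, and which is not explicitly stated in the preliminaries, is the \'etale base-change formula for relative jet spaces in step two; it is a routine consequence of the Hasse--Schmidt construction but worth recording as a separate lemma.
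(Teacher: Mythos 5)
Your proof is correct and follows essentially the same route as the paper's: localize on $\widetilde{W}^t$, use the \'etale chart $\phi$ to reduce to the monomial hypersurface $Y$, identify $\mJet{\widetilde{W}^t\cap U}{S}{m}$ with $\mJet{Y}{S}{m}\times_Y(\widetilde{W}^t\cap U)$ via \'etale invariance of jet spaces, and conclude because $Y$ is defined by an equation with constant coefficients. The only cosmetic difference is that the paper verifies flatness of $\mJet{Y}{S}{m}\rightarrow S$ by writing out the generators of the jet ideal and noting they do not involve $A$, whereas you package the same observation as $Y\cong Y_0\times_{\KK}S$ together with Proposition~\ref{pr:mjets-chang-base}.
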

\begin{proof}
The flatness is a local property, then for each  point $p\in \widetilde{W}^t$ it suffices to consider an open $U\subset \widetilde{\AF}_{S}^{n+1}$
and to prove that the morphism $\mJet{\widetilde{W}^t_{U}}{S}{m}\rightarrow S$ is flat, where $\widetilde{W}^t_{U}:=U\cap\widetilde{W}^t_{U}$.\\

 Let us consider the open affine $U\subset \widetilde{\AF}_{S}^{n+1}$ from section  \ref{sec:Hip-Gen}, in other words, 
 let us suppose that there exists an \'etale morphism

 \begin{center}

$\xymatrix @!0 @R=1.5pc @C=2pc{  
 U \ar@{->}[rr]^{\phi} \ar@{->}[rdd]&& 
 \AF_S^{n+1}\ar@{->}[ldd]&\hspace{3cm}:=\spec A[y_1,...,y_{n+1}] \\
   &&& \\
   & S&& 
}$   
\end{center}
such that the $S$-scheme  $\widetilde{W}^t_U$ is defined by the ideal $\phi^{\star}\mathcal{I}$, where
$\mathcal{I}=(y_{1}^{a_{1}}\cdots y_{n+1}^{a_{n+1}})$,  $a_i\geq 0$.\\  

Let $Y$ be the $S$-scheme defined by the ideal $\mathcal{I}$.  Then 

\begin{center}
 
$\Gamma(Y,\mathcal{O}_{\mJet{Y}{S}{m}}):=A[
\{y_{i j}\mid 1\leq i\leq n+1,\; 0\leq j\leq m\}]/J_m$,
\end{center}
where $J_m$ is generated by the polynomials $\frac{1}{i!}\partial_t^{i}y(t)\mid_{t=0}$, $0\leq i\leq m$, where 
\begin{center}

$y(t)=(y_{10}+y_{11}t\cdots)^{a_1}\cdots(y_{n+1 0}+y_{n+1 1}t\cdots)^{a_{n+1}}$.
\end{center}

Because the generators of $J_m$ are not  dependent on $A$, we obtain that the morphism $\mJet{Y}{S}{m}\rightarrow S$ is flat.  
Because the morphism $\phi$  is \'etale, there exists an isomorphism of $\widetilde{W}^t_U$-schemes:  

\begin{center}
 $\mJet{\widetilde{W}^t_U}{S}{m}\cong \mJet{Y}{S}{m}\times_{Y} \widetilde{W}^t_U$
\end{center}
By base change the morphism $\widetilde{W}^{t}_U\rightarrow Y$ is \'etale, in particular it is flat, which implies that  $\mJet{\widetilde{W}^t_U}{S}{m}$
is flat over $S$. 
\end{proof}

The following result is an application of Theorem \ref{th:flat-mjet}.\\

Let $v:=(v_1,...,v_n)\in \ZZ^n_{>0}$, $d\in \ZZ_{>0}$ et let $\f\in \KK[x_1,...,x_n]$ be a quasi-homogeneous
polynomial of the type $(d,v)$, which is to say that $\f$ is homogeneous of degree $d$ 
with respect to the graduation   $\nu_v x_i=v_i$.  Let $\h$ belong to  $\KK[x_1,...,x_n]$ such that $\nu_v\h>d$.  
We note $V$ (resp. $V'$) the hypersurface given by the equation  $\f(x_1,...,x_n)=0$ (resp. $\f(x_1,...,x_n)+\h(x_1,...,x_n)=0$).
We suppose that $V$ (resp. $V'$) has a unique isolated singularity at the origin of $\AF_{\KK}^n$, 
and that $V$ (resp. $V'$) does not contain any of $T$-orbit of $\AF_{\KK}^n$. In addition we suppose that 
$\f$ and  $\f_1:=\f+\h$ are non degenerate  with respect to the Newton boundary $\Gamma(\f)$.
We remark that the morphism  $\pi:X(\Sigma_{\f})\rightarrow \AF^{n}$, where $\Sigma_{\f}$ is an admissible regular fan,  
is an embedded resolution of varieties $V$ and $V'$, and that the exceptional fibers of the desingularizations $\pi_0:X\rightarrow V$,  $\pi_1:X'\rightarrow V'$ 
induced by the embedded  resolution  $\pi:X(\Sigma_{\f})\rightarrow \AF^{n}$ are trivially homeomorphic.  
In the following theorem, by abuse of notation, we use the same notation to designate the 
irreducible components of the exceptional fibers of $\pi_0$ and $\pi_1$. We can apply the following theorem 
(proved in \cite{Ley14}), to our new hypothesis:

\begin{theorem}[\cite{Ley14}]
 \label{th:def-appl-nash}
  If the divisor $\E$ belongs to the image of the Nash Application $\mathcal{N}_{V}$, then $\E$
  belongs to the Nash Application  $\mathcal{N}_{V'}$. 
 \end{theorem}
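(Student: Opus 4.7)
The plan is to invoke Theorem~\ref{th:flat-mjet} for a $\FN{\f}$-deformation that directly interpolates $V$ and $V'$, and to use the common embedded resolution $\pi$ to identify essential divisors across the two closed fibers.

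\smallskip

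\textbf{Construction of the deformation.} Let $S := \AF^1_\KK$ and consider the hypersurface $W \subset \AF^n_S$ defined by $\F := \f + s\,\h$. Since $\f$ is quasi-homogeneous of type $(d,v)$ and $\nu_v \h > d$, every exponent of $\h$ has $v$-weight strictly greater than $d$ and hence lies in $\PN{\f}$; consequently $W$ is a $\FN{\f}$-deformation of $V$ in the sense of Section~\ref{se:new-def}, admitting the Simultaneous Embedded Resolution $\varphi := \pi \times \Id_S : X(\REN{\f}) \times S \to \AF^n_S$. Localising $S$ at the closed points $s=0$ and $s=1$ and applying Theorem~\ref{th:flat-mjet} to each, the morphism $\varrho_m : \red{(\mJet{W}{S}{m})} \to S$ is flat in Zariski neighbourhoods of these two points for every $m \geq 0$, with fibers $\red{(V_m)}$ and $\red{(V'_m)}$ respectively.

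\smallskip

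\textbf{Transport and identification.} Let $C$ be the Nash component of $V$ with $\mathcal{N}_V(C) = E$, and fix $m$ large enough that the image of $C$ in $V_m$ under the natural truncation is Zariski-dense in a unique irreducible component $\mathcal{C}^m$ of $\red{(V_m)} \cap \pi_m^{-1}(\sing V)$, where $\pi_m : V_m \to V$ is the canonical projection. Pick an irreducible component $\mathcal{Z}$ of $\red{(\mJet{W}{S}{m})}$ containing $\mathcal{C}^m$ in its fiber over $s=0$. By Lemma~\ref{le:mjet-domi}, $\mathcal{Z}$ dominates $S$; by flatness at $s=1$, the fiber $\mathcal{Z}_1 \subset \red{(V'_m)}$ has the same dimension as $\mathcal{C}^m$. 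Because $\varphi$ is the same toric morphism $\pi \times \Id$ in every fiber, every irreducible component of the exceptional divisor of $\pi$, in particular $E$, appears in the fiber of $\widetilde{W}^s \to S$ at each closed $s$, and the corresponding relative exceptional components are flat over $S$. Lifting $\mathcal{Z}$ through $\varphi_m$ via Proposition~\ref{pr:morf-dom} and letting $m \to \infty$ then produces a Nash component $C'$ of $V'$ whose image under $\mathcal{N}_{V'}$ is again $E$; hence $E \in \mathrm{Im}(\mathcal{N}_{V'})$.

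\smallskip

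\textbf{Main obstacle.} The delicate step is the passage from $m$-jet flatness to the identification of Nash components, since Theorem~\ref{th:flat-mjet} only controls reduced $m$-jet spaces while the Nash application is defined on arcs. Two standard ingredients are needed: for $m$ above a threshold (depending on the log discrepancy of $E$), Nash components of each fiber $V_s$ are encoded faithfully by irreducible components of $\red{((V_s)_m)} \cap \pi_m^{-1}(\sing V_s)$ of the expected dimension; and the resulting bijection between such components and essential divisors of $\pi$ is preserved across the flat family, a consequence of the smoothness of $\widetilde{W}^s / S$ together with the constancy across fibers of the combinatorial structure of the relative normal crossing divisor $\widetilde{W}^t - \widetilde{W}^s$, whose components are parameterised by cones of $\REN{\f}$.
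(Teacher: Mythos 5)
The paper itself contains no proof of this statement: it is quoted from \cite{Ley14} and presented only as an application of Theorem \ref{th:flat-mjet}, so there is no in-text argument to compare yours against. Judged on its own terms, your first half is sound and is the intended use of Theorem \ref{th:flat-mjet}: the family $\F=\f+s\,\h$ is a $\FN{\f}$-deformation (note that the inclusion $\PN{\h}\subseteq \PN{\f}$ is not automatic from $\nu_v\h>d$ alone; it uses the hypothesis that $V$ contains no $T$-orbit, which forces $\f$ to contain a pure power of each variable and hence $\PN{\f}\supseteq\{e\in\RR^n_{\geq 0}:\langle v,e\rangle\geq d\}$), and after translating the parameter one gets a $\FN{\f_1}$-deformation of $V'$ at $s=1$, so $\red{(\mJet{W}{S}{m})}$ is flat near both closed fibres.

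The second half, however, is a statement of intent rather than a proof, and the steps you defer are exactly where the content of the theorem lies. (i) Theorem \ref{th:flat-mjet} is proved only for finite $m$, while the Nash application lives on the arc space; ``letting $m\to\infty$'' is not an operation you have defined. (ii) The claimed dictionary between Nash components and irreducible components of $\red{(V_m)}$ lying over $\sing V$ for $m\gg 0$ is false as stated: components of jet schemes over the singular locus need not be dominated by arcs, and the image of a Nash component under truncation need not be dense in an irreducible component of $V_m$. What is actually needed is a criterion of the form ``$\E$ lies in the image of $\mathcal{N}_{V}$ if and only if $\overline{\mathcal{N}_{\E}}\not\subseteq\overline{\mathcal{N}_{\E'}}$ for every $\E'\neq\E$'', where $\mathcal{N}_{\E}$ denotes the arcs whose lift to $X(\REN{\f})$ is centred in $\E$, together with a proof that these non-inclusions (equivalently, the relevant codimensions, read off at a sufficiently large finite level) are preserved along the flat family. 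None of this appears. (iii) Even granting that some Nash component $C'$ of $V'$ is produced, you never show that its generic arc lifts with centre dense in $\E$ rather than in another component of the exceptional fibre of $\pi_1$; ``the combinatorics are constant across fibres'' is an assertion, not a mechanism. (iv) Working over the global base $S=\AF^1_{\KK}$, dominance of $S$ by the component $\mathcal{Z}$ only gives a constructible, hence cofinite, image, which may a priori miss $s=1$; Theorem \ref{th:flat-mjet} and Lemma \ref{le:mjet-domi} are local statements over a local base, so transporting information from the fibre at $0$ to the fibre at $1$ requires an additional argument.
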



\begin{thebibliography}{0}


\bibitem{Bli11}
Manuel Blickle.
\newblock A short course on geometric motivic integration.
\newblock In {\em Motivic integration and its interactions with model theory
  and non-{A}rchimedean geometry. {V}olume {I}}, volume 383 of {\em London
  Math. Soc. Lecture Note Ser.}, pages 189--243. Cambridge Univ. Press,
  Cambridge, 2011.

\bibitem{BMS13}
Clemens Bruschek, Hussein Mourtada, and Jan Schepers.
\newblock Arc spaces and the {R}ogers-{R}amanujan identities.
\newblock {\em Ramanujan J.}, 30(1):9--38, 2013.

\bibitem{dFe13}
Tommaso de~Fernex.
\newblock Three-dimensional counter-examples to the {N}ash problem.
\newblock {\em Compos. Math.}, 149(9):1519--1534, 2013.

\bibitem{dFDo14}
Tommaso de~Fernex and Roi Docampo.
\newblock Terminal valuations and the nash problem.
\newblock to appear in Invent. Math., arXiv:1404.0762 [math.AG], 3 april 2014.

\bibitem{DeLo99}
Jan Denef and Fran{\c{c}}ois Loeser.
\newblock Germs of arcs on singular algebraic varieties and motivic
  integration.
\newblock {\em Invent. Math.}, 135(1):201--232, 1999.

\bibitem{DelO02}
Jan Denef and Fran{\c{c}}ois Loeser.
\newblock Motivic integration, quotient singularities and the {M}c{K}ay
  correspondence.
\newblock {\em Compositio Math.}, 131(3):267--290, 2002.

\bibitem{Eis95}
David Eisenbud.
\newblock {\em Commutative algebra}, volume 150 of {\em Graduate Texts in
  Mathematics}.
\newblock Springer-Verlag, New York, 1995.
\newblock With a view toward algebraic geometry.

\bibitem{EiMu04}
Lawrence Ein and Mircea Musta{\c{t}}{\v{a}}.
\newblock Inversion of adjunction for local complete intersection varieties.
\newblock {\em Amer. J. Math.}, 126(6):1355--1365, 2004.

\bibitem{Bob12}
Javier Fernandez~de Bobadilla.
\newblock Nash problem for surface singularities is a topological problem.
\newblock {\em Adv. Math}, 230:131--176, 2012.

\bibitem{BoPe12a}
Javier Fern{\'a}ndez~de Bobadilla and Mar{\'{\i}}a~Pe Pereira.
\newblock The {N}ash problem for surfaces.
\newblock {\em Ann. of Math. (2)}, 176(3):2003--2029, 2012.

\bibitem{Ger86}
Greuel Gert-Martin.
\newblock Constant Milnor number implies constant
multiplicity for quasihomogeneous singularities.
\newblock {\em Manuscr. Math.} 56, 159--166, 1986.
  
\bibitem{Gon07}
Pedro ~D. Gonz{\'a}lez~P{\'e}rez.
\newblock Bijectiveness of the {N}ash map for quasi-ordinary hypersurface
  singularities.
\newblock {\em Int. Math. Res. Not. IMRN}, (19):Art. ID rnm076, 13, 2007.

\bibitem{GoSm06}
Russell~A. Goward, Jr. and Karen~E. Smith.
\newblock The jet scheme of a monomial scheme.
\newblock {\em Comm. Algebra}, 34(5):1591--1598, 2006.

\bibitem{Gole97}
G{\'e}rard Gonzalez-Sprinberg and Monique Lejeune-Jalabert.
\newblock Families of smooth curves on surface singularities and wedges.
\newblock {\em Ann. Polon. Math.}, 67(2):179--190, 1997.




\bibitem{Har77}
Robin Hartshorne.
\newblock {\em Algebraic geometry}.
\newblock Graduate Texts in Mathematics, No. 52. Springer-Verlag, New York,
  1977.

\bibitem{IsKo03}
Shihoko Ishii and J{\'a}nos Koll{\'a}r.
\newblock The {N}ash problem on arc families of singularities.
\newblock {\em Duke Math. J.}, 120(3):601--620, 2003.

\bibitem{Ish05}
Shihoko Ishii.
\newblock Arcs, valuations and the {N}ash map.
\newblock {\em J. Reine Angew. Math.}, 588:71--92, 2005.

\bibitem{Ish06}
Shihoko Ishii.
\newblock The local {N}ash problem on arc families of singularities.
\newblock {\em Ann. Inst. Fourier}, 56(4):1207--1224, 2006.

\bibitem{JoKo13}
Jennifer~M. Johnson and J{\'a}nos Koll{\'a}r.
\newblock Arc spaces of {$cA$}-type singularities.
\newblock {\em J. Singul.}, 7:238--252, 2013.

\bibitem{Kol12}
J{\'a}nos Koll{\'a}r.
\newblock Arc spaces of c$\mbox{{A}}_{1}$ singularities.
\newblock Preprint, arXiv:1207.5036v2 [math.AG], jul 2012.

\bibitem{Kon95}
Maxim Kontsevich.
\newblock Motivic integration, 1995.
\newblock {\it Lecture at Orsay}.

\bibitem{Kou76}
A.~G. Kouchnirenko.
\newblock Poly\`edres de {N}ewton et nombres de {M}ilnor.
\newblock {\em Invent. Math.}, 32(1):1--31, 1976.

\bibitem{Ley11a}
Maximiliano Leyton-Alvarez.
\newblock R\'esolution du probl\`eme des arcs de {N}ash pour une famille
  d'hypersurfaces quasi-rationnelles.
\newblock {\em Annales de la facult\'e des sciences de Toulouse},
  20(3):613--667, 2011.

  \bibitem{Ley14}
Maximiliano Leyton-Alvarez.
\newblock Familles d'espaces de {$m$}-jets et d'espaces d'arcs.
\newblock  {\em Journal Pure Applied Algebra}, 220:1--33, 2016.

		
  
  

\bibitem{Liu02}
Qing Liu.
\newblock {\em Algebraic geometry and arithmetic curves}, volume~6 of {\em
  Oxford Graduate Texts in Mathematics}.
\newblock Oxford University Press, Oxford, 2002.
\newblock Translated from the French by Reinie Ern{\'e}, Oxford Science
  Publications.

\bibitem{Lej80}
Monique Lejeune-Jalabert.
\newblock Arcs analytiques et resolution minimale des singularites des surfaces
  quasi homogenes.
\newblock In {\em S\'eminaire sur les Singularit\'es des Surfaces}, volume 777
  of {\em Lecture Notes in Mathematics}, pages 303--336. Springer Berlin /
  Heidelberg, Amsterdam, 1980.

\bibitem{LeRe99}
Monique Lejeune-Jalabert and Ana~J. Reguera-L{\'o}pez.
\newblock Arcs and wedges on sandwiched surface singularities.
\newblock {\em Amer. J. Math.}, 121(6):1191--1213, 1999.

\bibitem{Loo02}
Eduard Looijenga.
\newblock Motivic measures.
\newblock {\em Ast\'erisque}, (276):267--297, 2002.
\newblock S{\'e}minaire Bourbaki, Vol. 1999/2000.

\bibitem{Mer80}
Michel Merle.
\newblock Poly\`edre de newton, \'eventails et d\'esingularisation, d'apr\`es
  a. n. varchenko.
\newblock In {\em S\'eminaire sur les Singularit\'es des Surfaces, Palaiseau,
  France, 1976-1977}, volume 777 of {\em Lecture Notes in Mathematics}, pages
  289--294. Springer Berlin / Heidelberg, Amsterdam, 1980.

\bibitem{Mor08}
Marcel Morales.
\newblock Some numerical criteria for the {N}ash problem on arcs for surfaces.
\newblock {\em Nagoya Math. J.}, 191:1--19, 2008.

\bibitem{Mou11}
Hussein Mourtada.
\newblock Jet schemes of complex plane branches and equisingularity.
\newblock {\em Ann. Inst. Fourier (Grenoble)}, 61(6):2313--2336 (2012), 2011.

\bibitem{Mus01}
Mircea Musta{\c{t}}{\u{a}}.
\newblock Jet schemes of locally complete intersection canonical singularities.
\newblock {\em Invent. Math.}, 145(3):397--424, 2001.
\newblock With an appendix by David Eisenbud and Edward Frenkel.

\bibitem{Nas95}
John~F. Nash.
\newblock Arc structure of singularities.
\newblock {\em Duke Math. J.}, 81(1):31--38 (1996), 1995.
\newblock A celebration of John F. Nash, Jr.

\bibitem{Oka87}
Mutsuo Oka.
\newblock On the resolution of the hypersurface singularities.
\newblock In {\em Complex analytic singularities}, volume~8 of {\em Adv. Stud.
  Pure Math.}, pages 405--436. North-Holland, Amsterdam, 1987.

\bibitem{OSh87}
Donal~B. O'Shea.
\newblock Topologically trivial deformations of isolated quasihomogeneous
  hypersurface singularities are equimultiple.
\newblock {\em Proc. Amer. Math. Soc.}, 101(2):260--262, 1987.

\bibitem{Pet09}
Peter Petrov.
\newblock Nash problem for stable toric varieties.
\newblock {\em Math. Nachr.}, 282(11):1575--1583, 2009.

\bibitem{Ple08}
Camille Pl{\'e}nat.
\newblock The {N}ash problem of arcs and the rational double points {$D_n$}.
\newblock {\em Ann. Inst. Fourier}, 58(7):2249--2278, 2008.

\bibitem{PlPo06}
Camille Pl{\'e}nat and Patrick Popescu-Pampu.
\newblock A class of non-rational surface singularities with bijective {N}ash
  map.
\newblock {\em Bull. Soc. Math. France}, 134(3):383--394, 2006.

\bibitem{PlPo08}
Camille Pl{\'e}nat and Patrick Popescu-Pampu.
\newblock Families of higher dimensional germs with bijective {N}ash map.
\newblock {\em Kodai Math. J.}, 31(2):199--218, 2008.

\bibitem{PlSp12}
Camille Pl{\'e}nat and Mark Spivakovsky.
\newblock The nash problem of arcs and the rational double point
  $\mathbf{E_6}$.
\newblock {\em Kodai Math. J.}, 35(1):173--213, 2012.

\bibitem{Reg95}
Ana~J. Reguera.
\newblock Families of arcs on rational surface singularities.
\newblock {\em Manuscripta Math.}, 88(3):321--333, 1995.
\bibitem{Tei73}
Bernard Teissier.
\newblock Cycles \'evanescents, sections planes et conditions de {W}hitney.
\newblock In {\em Singularit\'es \`a {C}arg\`ese ({R}encontre {S}ingularit\'es
  {G}\'eom. {A}nal., {I}nst. \'{E}tudes {S}ci., {C}arg\`ese, 1972)}, pages
  285--362. Ast\'erisque, Nos. 7 et 8. Soc. Math. France, Paris, 1973.
\bibitem{LDRa76}
L{\^e}~D{\~u}ng Tr{\'a}ng and C.~P. Ramanujam.
\newblock The invariance of {M}ilnor's number implies the invariance of the
  topological type.
\newblock {\em Amer. J. Math.}, 98(1):67--78, 1976.

\bibitem{Var76}
A.~N. Varchenko.
\newblock Zeta-function of monodromy and {N}ewton's diagram.
\newblock {\em Invent. Math.}, 37(3):253--262, 1976.

\bibitem{Var82}
A.~N. Varchenko.
\newblock A lower bound for the codimension of the {$\mu ={\rm const}$} stratum
  in terms of the mixed {H}odge structure.
\newblock {\em Vestnik Moskov. Univ. Ser. I Mat. Mekh.}, (6):28--31, 120, 1982.

\bibitem{Voj07}
Paul Vojta.
\newblock Jets via {H}asse-{S}chmidt derivations.
\newblock In {\em Diophantine geometry}, volume~4 of {\em CRM Series}, pages
  335--361. Ed. Norm., Pisa, 2007.
\end{thebibliography}
\end{document}